\numberwithin{equation}{section} %
\theoremstyle{plain} %
  \newtheorem{theorem}{Theorem}[section] %
  \newtheorem{proposition}[theorem]{Proposition}%
  \newtheorem{lemma}[theorem]{Lemma}%
  \newtheorem{corollary}[theorem]{Corollary}%
\theoremstyle{definition} %
\newtheorem{def-prop}[theorem]{Definition-Proposition}
\theoremstyle{remark} %
  \newtheorem{remark}[theorem]{Remark}%
\newcommand{\dcoh}[2]{H_{\overline \partial}^{#1}({#2})}
\newcommand{\invHom}[3]{\operatorname{Hom}_{#1}({#2},{#3})}
\newcommand{\Kfdrep}[2]{F^{#1}(#2)}
\newcommand\Sol{{\mathcal {S}}\!{\text{\it{ol}}}\,}
\begin{document}
\title
{Holomorphic Laplacian on the Lie ball and the Penrose transform}
\author{Hideko SEKIGUCHI}
\date{} %
\maketitle %

\begin{abstract}
We prove 
 that any holomorphic function $f$
 on the Lie ball of even dimension 
satisfying $\Delta f=0$
 is obtained uniquely
 by the higher-dimensional Penrose transform
 of a Dolbeault cohomology
 for a twisted line bundle 
 of a certain domain 
 of the Grassmannian of isotropic subspaces.  
To overcome the difficulties
 arising from that the line bundle parameter 
 is outside the {\it{good range}}, 
 we use some techniques from algebraic representation theory.  
\end{abstract}
\noindent
\textit{Keywords and phrases:}
reductive group, 
indefinite-K{\"a}hler manifold, 
Penrose transform, 
bounded symmetric domain,
Dolbeault cohomology,  
minimal representation.

\medskip
\noindent
\textit{2020 MSC}:
Primary  22E46; 
Secondary 43A85, 33C70, 32L25.

\setcounter
{section}{0}

\section{Introduction}

We consider the holomorphic Laplacian
 $\Delta=\frac{\partial^2}{\partial z_1^2} + \cdots +\frac{\partial^2}{\partial z_n^2}$ on the Lie ball
\[
D:= \{z \in {\mathbb{C}}^n: 
 |z \, {}^{t\!}z|^2+1 - 2 \overline z\, {}^{t\!}z>0, 
 |z \, {}^{t\!}z|<1
\}, 
\]
which is the bounded symmetric domain of type D IV
 in the {\'E}.\ Cartan classification.

The goal of this article is to prove
 that any holomorphic function $f$ on $D$
 satisfying $\Delta f=0$
 can be obtained uniquely
 as the higher-dimensional Penrose transform 
 of a Dolbeault cohomology 
 of a non-compact complex manifold $X=SO_0(2,2m)/U(1,m)$
 when $n=2m$.

To formulate our main results, 
 let $m>1$, $G:=SO_0(2,2m)$ 
 be the identity component of the indefinite orthogonal group of 
 signature $(2, 2m)$, 
 $K=SO(2) \times SO(2m)$ a maximal compact subgroup, 
 and $\theta$ the corresponding Cartan involution of $G$.  
Let $T$ be a maximal torus of $K$, 
 ${\mathfrak{t}}$ its Lie algebra, 
 and ${\mathfrak{t}}^{\vee}$ the dual space.  
We take the standard basis 
 $\{e_0, e_1, \dots, e_m\}$
 in $\sqrt{-1}{\mathfrak{t}}^{\vee}$
 such that 
 $\Delta({\mathfrak{g}}_{\mathbb{C}}, {\mathfrak{t}}_{\mathbb{C}})
 =\{\pm e_i \pm e_j:0 \le i <j \le m\}$. 
One has a decomposition of the complexified Lie algebra 
$
   {\mathfrak{g}}_{\mathbb{C}}={\mathfrak{so}}(2m+2, {\mathbb{C}})
  ={\mathfrak{p}}_-+{\mathfrak{k}}_{\mathbb{C}}+{\mathfrak{p}}_+
$
 as a ${\mathfrak{k}}_{\mathbb{C}}$-module
 with $\Delta({\mathfrak{p}}_+, {\mathfrak{t}}_{\mathbb{C}}):=\{e_0 \pm e_j: 1 \le j \le m\}$.  
The bounded symmetric domain $D$ may be identified 
 with the Harish-Chandra realization of $G/K$
 in ${\mathbb{C}}^{2m} \simeq {\mathfrak{p}}_- \subset G_{\mathbb{C}}/K \exp ({\mathfrak{p}}_+)$.

We set ${\bf{1}}_{m+1} = e_0 + \dots +e_{m}
 \in \sqrt{-1}{\mathfrak{t}}^{\vee}$, 
 and define a $\theta$-stable parabolic subalgebra
 ${\mathfrak{q}}= {\mathfrak{l}}_{\mathbb{C}} + {\mathfrak{u}}$
 with ${\mathfrak{l}}_{\mathbb{C}} \supset {\mathfrak{t}}_{\mathbb{C}}$
 such that the roots $\alpha$
 for ${\mathfrak{l}}_{\mathbb{C}}$ 
 and ${\mathfrak{u}}$ are given 
 by $\langle \alpha, {\bf{1}}_{m+1} \rangle=0$
 and $\langle \alpha, {\bf{1}}_{m+1} \rangle>0$, 
 respectively.  
Let $Q$ be the parabolic subgroup 
 of the complexified Lie group $G_{\mathbb{C}}=SO(2m+2, {\mathbb{C}})$
 with Lie algebra ${\mathfrak{q}}$, 
 and $L:=G \cap Q \simeq U(1,m)$.  
Then the homogeneous space $X:=G/L=SO_0(2,2m)/U(1,m)$ is identified
 with the set of indefinite Hermitian structures
 on ${\mathbb{R}}^{2m+2}$ 
 of signature $(1,m)$, 
 and becomes a complex manifold
 as an open set of $G_{\mathbb{C}}/Q \simeq SO(2m+2)/U(m+1)$, 
 the Grassmannian of isotropic subspaces
 of ${\mathbb{C}}^{2m+2}$
 equipped with non-degenerate quadratic form.  
For $\lambda \in {\mathbb{Z}}$, 
 let ${\mathbb{C}}_{\lambda}$ denote the holomorphic character 
 of $L_{\mathbb{C}} \simeq GL(m+1, {\mathbb{C}})$
 given by ${\det}^{\lambda}$, 
 and we form a $G_{\mathbb{C}}$-equivariant holomorphic line bundle
${\mathcal{L}}_{\lambda}:=G_{\mathbb{C}} \times_Q {\mathbb{C}}_{\lambda}$
 over $G_{\mathbb{C}}/Q$.  
We shall use the same letter ${\mathcal{L}}_{\lambda}$
 to denote its restriction  ${\mathcal{L}}_{\lambda}|_X \simeq G \times_L {\mathbb{C}}_{\lambda}$
 to the open subset $X \simeq G/L$ of $G_{\mathbb{C}}/Q$.  
With this notation, 
 the canonical bundle $\Omega_X$
 of $X$
 is given by ${\mathcal{L}}_m$.

Let $\dcoh j{X, {\mathcal{L}}_{\lambda}}$ be
 the $j$-th Dolbeault cohomology group
 with coefficients in ${\mathcal{L}}_{\lambda}$, 
 which carries a natural Fr{\'e}chet topology
 by the closed range theorem 
 of the $\bar \partial$ operator \cite{xwong}.  
We set 
\[
  \Sol(D, \Delta)=
  \{f \in {\mathcal{O}}(D): \Delta f =0\}, 
\]
 and equip it with the topology of uniform convergence
 on every compact sets.  
We prove:

\begin{theorem}
[see Theorem {\ref{thm:main}}]
\label{thm:23021140}
Let ${\mathcal{R}}$ be the cohomological integral transform 
 (Penrose transform)
 defined in \eqref{eqn:defPen} below.  
Then ${\mathcal{R}}$ gives a topological $G$-isomorphism: 
\[
  {\mathcal{R}} \colon \dcoh{m(m-1)}{X, {\mathcal{L}}_{m-1}} \to 
 \Sol(D, \Delta).  
\]
\end{theorem}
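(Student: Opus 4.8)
\emph{Proof strategy.}
The plan is to trade the analytic assertion for an algebraic one about Harish--Chandra modules, identify both sides there, and then transfer the result back. First I would make the transform $\mathcal{R}$ of \eqref{eqn:defPen} explicit through its underlying double fibration: write $Z$ for the correspondence space with the two $G$-equivariant holomorphic maps $X \xleftarrow{\;\mu\;} Z \xrightarrow{\;\nu\;} D$, obtained by restricting, over the open orbits $X\subset G_{\mathbb{C}}/Q$ and $D\subset G_{\mathbb{C}}/Q_{+}$, the natural projections from the partial flag manifold of $G_{\mathbb{C}}$ that records at once an isotropic subspace and a point of the compact dual quadric of $D$. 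Then $\mathcal{R}$ is realized as $\nu_{*}\circ\mu^{*}$: pull a $\bar\partial$-class up from $X$ to $Z$ and integrate along the compact complex fibres of $\nu$. Since $\mu$ is a holomorphic submersion with Stein (hence cohomologically trivial) fibres and $\nu$ has connected compact complex fibres, $\mathcal{R}$ is a continuous $G$-map $\dcoh{m(m-1)}{X,{\mathcal{L}}_{m-1}}\to{\mathcal{O}}(D)$, and its image is annihilated by $\Delta$ because $\Delta$ is, up to scalar, the unique $SO(2m)$-invariant quadratic element of $S^{2}({\mathfrak{p}}_{+})$ (which generates a submodule of the relevant generalized Verma module), and its pullback to $Z$ kills $\mu^{*}\omega$ for bidegree reasons along the fibres of $\nu$, a relative-differential-operator computation. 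This produces a continuous $G$-map $\mathcal{R}\colon\dcoh{m(m-1)}{X,{\mathcal{L}}_{m-1}}\to\Sol(D,\Delta)$ between Fr{\'e}chet spaces; what remains is bijectivity.

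Both spaces are globalizations of explicit $({\mathfrak{g}},K)$-modules, and the core of the argument is to pin those modules down. On the target side, the $K$-finite vectors of ${\mathcal{O}}(D)$ form the generalized Verma module $U({\mathfrak{g}})\otimes_{U({\mathfrak{k}}_{\mathbb{C}}+{\mathfrak{p}}_{+})}{\mathbb{C}}$, on which $\Delta$ acts by that $SO(2m)$-invariant element of $S^{2}({\mathfrak{p}}_{+})$; decomposing $S({\mathfrak{p}}_{-})$ under $K=SO(2)\times SO(2m)$ into $SO(2m)$-harmonics and reading off $\ker\Delta$ exhibits $\Sol(D,\Delta)$ as the maximal globalization of an irreducible lowest-weight module $L$ with an explicit $K$-type formula and a one-dimensional generating $K$-type --- in fact the minimal representation of $SO_{0}(2,2m)$. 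On the source side one uses the Dolbeault realization of cohomologically induced modules: for the $\theta$-stable parabolic ${\mathfrak{q}}$, the cohomology $\dcoh{s}{X,{\mathcal{L}}_{\lambda}}$ in the pertinent single degree $s$ is the maximal globalization of a Zuckerman module $A_{{\mathfrak{q}}}(\lambda')$, with $\lambda'$ obtained from $\lambda$ by the usual $\rho$-shift. The difficulty flagged in the abstract is that, since $\Omega_{X}={\mathcal{L}}_{m}$, the twist ${\mathcal{L}}_{m-1}={\mathcal{L}}_{-1}\otimes\Omega_{X}$ puts $\lambda=m-1$ one notch below the edge of the \emph{good range}, so the standard vanishing-and-irreducibility package does not apply verbatim.

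To handle the non-good-range parameter I would proceed in two complementary ways. First, tensor ${\mathcal{L}}_{m-1}$ with a line bundle whose parameter lands back inside the good range, invoke the known degree-concentration and irreducibility there, and transport the conclusion back across the wall by a Jantzen--Zuckerman translation functor, tracking carefully that the relevant infinitesimal character stays regular for $L_{\mathbb{C}}$ but becomes singular for $G$ --- which is exactly what forces the \emph{minimal} representation, rather than a larger $A_{{\mathfrak{q}}}$, to appear, and that the cohomology stays concentrated in the single degree $m(m-1)$. Second, and in tandem, resolve ${\mathcal{L}}_{m-1}$ on the flag manifold by a (relative) Bernstein--Gelfand--Gelfand-type complex and compute $G$-cohomology directly, which also yields the $K$-type multiplicities. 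Either route shows that $\dcoh{m(m-1)}{X,{\mathcal{L}}_{m-1}}$ is the maximal globalization of an irreducible module with the same infinitesimal character and $K$-types as $L$; by uniqueness of the irreducible module with a given infinitesimal character and minimal $K$-type (or by direct comparison of $K$-type formulas) the two Harish--Chandra modules coincide.

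Finally, $\mathcal{R}$ is then a continuous $G$-map between the maximal globalizations of one and the same irreducible Harish--Chandra module, so it suffices to prove $\mathcal{R}\neq 0$. Granting that, $\overline{\mathrm{Image}\,\mathcal{R}}$ is a nonzero closed $G$-submodule of the topologically irreducible $G$-module $\Sol(D,\Delta)$, hence everything; since $\mathcal{R}$ induces an isomorphism on the underlying $({\mathfrak{g}},K)$-modules, its image is in fact all of $\Sol(D,\Delta)$, and a continuous $G$-bijection between Fr{\'e}chet spaces is a topological isomorphism by the open mapping theorem. Nonvanishing of $\mathcal{R}$ I would get by evaluating it on an explicit class --- e.g.\ a $\bar\partial$-closed representative built from a Leray covering adapted to the cycle $\mu(\nu^{-1}(z_{0}))$ over a base point $z_{0}\in D$, or dually by pairing against a compact homology cycle --- and checking that the resulting holomorphic function on $D$ is not identically zero, e.g.\ by matching its lowest Taylor coefficient at the origin with the bottom $K$-type of $\Sol(D,\Delta)$. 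I expect this nonvanishing step, together with the translation-functor bookkeeping for the non-good-range cohomology, to be the principal obstacles.
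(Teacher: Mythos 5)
Your overall skeleton --- realize both sides as maximal globalizations of irreducible Harish--Chandra modules, match them, show $\mathcal{R}\neq 0$, and finish with the open mapping theorem --- agrees with the paper's Section 5. But the central technical step is not actually carried out, and the method you propose for it is exactly where the known counterexamples live. You defer the non-vanishing, degree concentration, and irreducibility of $\dcoh{m(m-1)}{X,{\mathcal{L}}_{m-1}}$ to a Jantzen--Zuckerman translation across the wall into the singular infinitesimal character $(m-1,m-2,\dots,0,-1)$. Translation functors toward a singular/non-good parameter need not preserve irreducibility and can produce the zero module --- the paper emphasizes precisely this in (c)$'$ and (d)$'$, citing cases where all cohomologies vanish or the transform acquires an infinite-dimensional kernel. ``Tracking carefully that the minimal representation appears'' is the entire difficulty, not a bookkeeping remark. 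The paper instead works directly at $\lambda=m-1$, which is still in the \emph{weakly fair} range: Wong's theory already gives vanishing in degrees $\neq m(m-1)$ and unitarizability there, and the generalized Blattner formula \eqref{eqn:Bla}, reduced by the combinatorial Lemma \ref{lem:23021125} to a single Weyl group element, yields the exact multiplicity-free $K$-type formula $\bigoplus_\ell {\mathbb{C}}_{\ell+m-1}\boxtimes F^{SO(2m)}(\ell,0,\dots,0)$; irreducibility then follows from a ladder argument ($\mathfrak{p}_{\mathbb{C}}W_\ell\subset W_{\ell+1}\oplus W_{\ell-1}$ on a multiplicity-free, unitarizable module) plus an infinitesimal-character check ruling out the trivial submodule. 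None of this machinery is present in your sketch.

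A second, smaller gap: you assert that $\Delta$ annihilates the image of $\mathcal{R}$ ``for bidegree reasons along the fibres of $\nu$,'' which is the assertion (a) of the introduction and is not a formality. The paper in fact never verifies $\Delta\circ\mathcal{R}=0$ directly: it proves the image lands in ${\mathcal{O}}(G/K,{\mathcal{V}}_{m-1})$ (by the holomorphic-type results of \cite{xhs-ellholo,xholoell2}), observes that ${\mathcal{O}}(G/K,{\mathcal{V}}_{m-1})$ is $K$-multiplicity-free, and concludes that the image must equal $\Sol(D,\Delta)_{K\text{-finite}}$ because the two $K$-type formulas (Propositions \ref{prop:3.1} and \ref{prop:23021120}) coincide. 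Your non-vanishing step (evaluating on the bottom $K$-type) and the maximal-globalization conclusion do match the paper; but as written the proposal leaves the decisive non-good-range analysis unproved and routed through a technique that is unreliable precisely at this parameter.
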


In the case $m=2$, 
 via the double covering $SU(2,2) \to G= SO_0(2,4)$, 
 the group $G$ is of type A, 
 $X$ is biholomorphic to $SU(2,2)/U(1,2)$, 
 and $D$ is biholomorphic to the 4-dimensional bounded symmetric domain 
 of type A III.  
In this case, 
 the bijectivity of ${\mathcal{R}}$ in Theorem \ref{thm:23021140}
 was first proved
 in \cite{xEPW}, 
 and later generalized in \cite{xhseki96}
 by a different approach.

Theorem \ref{thm:23021140} in the case $m \ge 3$
 consists of the following assertions:
\newline
(a)\enspace
the range of ${\mathcal{R}}$ satisfies the differential equation;
\newline
(b)\,(surjectivity)\enspace
the Penrose transform ${\mathcal{R}}$ constructs all the solutions;
\newline
(c)\,(injectivity)\enspace
the kernel of ${\mathcal{R}}$ is zero;
\newline
(d)\,(non-vanishing)\enspace
the $m(m-1)$-th cohomology does not vanish;
\newline
(e)\,(cohomological purity)\enspace
the $j$-th cohomology vanishes if $j \ne m(m-1)$;
\newline
(f)\,(topology)\enspace
${\mathcal{R}}$ is not only a bijection 
 but also a topological isomorphism.

There have been various approaches to (a) in certain settings, 
{\it{e.g.,}} Eastwood--Penrose--Wells \cite{xEPW}, 
Mantini \cite{xmahw},
 Marastoni--Tanisaki \cite{xmrtani03}
 for sufficiently positive parameter $\lambda$.  
We note that the representations on the cohomologies 
 with coefficients in the \lq\lq{good range}\rq\rq\
 in the sense of Vogan \cite{xvu}
 are well-understood
 by the Beilinson--Bernstein theory \cite{xbebe}
 or by the algebraic representation theory 
({\it{e.g.,}} \cite{xvu, xvr}).  
However, 
 Theorem \ref{thm:23021140} needs to treat the parameter $\lambda$
 {\bf{outside the good range}}
 for which the general theory does not apply.  
It should be noted 
 that there are {\bf{counterexamples}}
 to what we may expect from (c)--(d):
\vskip0.5pc
\par\noindent
(c)$'$\enspace
the Penrose transform may have an infinite-dimensional kernel;
\newline
(d)$'$\enspace
the Dolbeault cohomologies of all degrees may vanish.  
\vskip 0.5pc
\par\noindent
See, {\it{e.g.,}} \cite{xhseki02, xhseki13} for (c)$'$;
 Kobayashi \cite{xkupq}
 and Trapa \cite{T01} for (d)$'$
 for some classical groups.  
 In fact, 
 (d) is a special case of a long-standing problem
 in algebraic representation theory
 about when Zuckerman's derived functor module 
 $A_{\mathfrak{q}}(\lambda) \ne 0$
 for singular $\lambda$
 outside the good range.

For $m \ge 3$, 
 there are two minimal representations
 of the group $G=SO_0(2,2m)$
 in the sense that their annihilator is the Joseph ideal
 \cite{xJoseph}
 of the enveloping algebra $U({\mathfrak{g}}_{\mathbb{C}})$, 
 and they are dual to each other in our setting, 
 see  {\it{e.g.,}} \cite{xtamori}.  
Theorem \ref{thm:23021140} gives their complex-geometric realization.  

\begin{corollary}
\label{cor:230217}
For $m \ge 3$, 
 the two minimal representations of $G=SO_0(2,2m)$
 are realized in the cohomologies $\dcoh {m(m-1)} {X, {\mathcal{L}}_{m-1}}$
 where ${\mathfrak{q}}$ is taken to be 
$
   {\mathfrak{l}}_{\mathbb{C}}+{\mathfrak{u}}
$
 or 
$
   {\mathfrak{l}}_{\mathbb{C}}+{\mathfrak{u}}^-
$.  
\end{corollary}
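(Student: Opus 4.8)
The plan is to read the corollary off Theorem~\ref{thm:23021140}. Fix first $\mathfrak{q}=\mathfrak{l}_{\mathbb{C}}+\mathfrak{u}$. By the theorem, $\dcoh{m(m-1)}{X,\mathcal{L}_{m-1}}$ is topologically $G$-isomorphic to $\Sol(D,\Delta)$, so it suffices to prove that $\Sol(D,\Delta)$ is a minimal representation in the sense of the paragraph before the corollary, i.e.\ that its annihilator in $U(\mathfrak{g}_{\mathbb{C}})$ is the Joseph ideal \cite{xJoseph}. I would first fix the automorphy factor on $\mathcal{O}(D)$ for which $\Delta$ becomes a $G$-morphism between the associated line bundles on $D$; then $\Sol(D,\Delta)$ is a closed $G$-submodule, and since it contains the constant functions it is a lowest weight module. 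Next I would compute its $K$-finite part: because $\Delta$ kills multiplication by $z\,{}^t\!z$, the $K$-finite solutions are spanned with multiplicity one by the spaces $\mathcal{H}^k(\mathbb{C}^{2m})$ of degree-$k$ harmonic polynomials, $k\ge 0$, each carrying a single $SO(2)$-weight that is affine in $k$ --- the \lq\lq ladder\rq\rq\ of $K$-types characteristic of a minimal representation.

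It then remains to identify this unitarizable $(\mathfrak{g},K)$-module with the minimal representation. Since the constants are the lowest $K$-type and generate the module, it is an irreducible scalar unitary lowest weight module; its single-ladder $K$-type pattern places it on the Wallach set of the type~D\,IV domain, and its Gelfand--Kirillov dimension is $2m-1=\tfrac12\dim_{\mathbb{C}}\mathcal{O}_{\min}$ for $\mathfrak{g}_{\mathbb{C}}=\mathfrak{so}(2m+2,\mathbb{C})$ (this is exactly why $m\ge 3$ is needed), so its associated variety is the closure of a minimal nilpotent $K_{\mathbb{C}}$-orbit. Together with its infinitesimal character --- the singular one attached to the Joseph ideal --- this forces the annihilator to be the Joseph ideal; equivalently, one invokes the classification of scalar unitary lowest weight modules and the known description of the minimal representation of $SO_0(2,2m)$ (see the references recalled before the corollary, e.g.\ \cite{xtamori}). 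This is the step that needs care, precisely because the parameter of $\mathcal{L}_{m-1}$ lies \textbf{outside the good range} --- the obstruction stressed in the introduction --- so the primitive ideal cannot be read off from the Beilinson--Bernstein correspondence \cite{xbebe} or Vogan's theory \cite{xvu}, and one must argue instead through the explicit lowest weight realization supplied by Theorem~\ref{thm:23021140}.

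Finally, for the opposite parabolic $\mathfrak{q}^-=\mathfrak{l}_{\mathbb{C}}+\mathfrak{u}^-$ I would run the same argument with the conjugate complex structure on $X=G/L$: conjugating Dolbeault forms (together with the identification $\overline{\mathbb{C}_\lambda}\simeq\mathbb{C}_{-\lambda}$ on $L\simeq U(1,m)$ and $\Omega_X=\mathcal{L}_m$) turns Theorem~\ref{thm:23021140} into a topological $G$-isomorphism of $\dcoh{m(m-1)}{X,\mathcal{L}_{m-1}}$ computed with $\mathfrak{q}^-$ onto the contragredient of $\Sol(D,\Delta)$, which by the duality recalled before the corollary is exactly the second minimal representation. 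The two cases together exhaust the two minimal representations of $G$. The main obstacle is thus the middle step --- pinning the primitive ideal of $\Sol(D,\Delta)$ down to the Joseph ideal for the singular, non-good parameter of $\mathcal{L}_{m-1}$; once that is secured, the passage to $\mathfrak{q}^-$ and the topological refinement are formal consequences of Theorem~\ref{thm:23021140}.
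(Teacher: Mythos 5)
Your proposal is correct in substance and, for the decisive step, lands on the same mechanism as the paper: an irreducible unitarizable lowest weight $({\mathfrak{g}}_{\mathbb{C}},K)$-module whose $K$-types form the single ladder $\bigoplus_{\ell}{\mathbb{C}}_{\ell+m-1}\boxtimes \Kfdrep{SO(2m)}{\ell,0,\dots,0}$ must be minimal. The differences are in packaging. The paper does not pass through $\Sol(D,\Delta)$ at all for the corollary: it works directly on the cohomology side, quoting the lowest-weight property from \cite{xhs-ellholo, xholoell2}, the $K$-type formula from Proposition \ref{prop:23021120}, and irreducibility from Proposition \ref{prop:irr}, and then simply matches this data against Tamori's classification \cite{xtamori}; your route through $\Sol(D,\Delta)$ via Theorem \ref{thm:23021140} and harmonic polynomials is equivalent (it reproves Proposition \ref{prop:3.1}) but adds nothing the theorem does not already give. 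Your alternative identification via Gelfand--Kirillov dimension $2m-1=\tfrac12\dim\mathcal{O}_{\min}$ plus the infinitesimal character, forcing the annihilator to be the Joseph ideal, is a legitimate and more self-contained substitute for citing \cite{xtamori}; note only that the restriction $m\ge 3$ is not forced by the dimension count (which also works for $m=2$) but by the type-A degeneration at $m=2$, where the Joseph ideal is not characterized in the same way --- the paper makes exactly this point in the introduction. Your explicit conjugation argument for ${\mathfrak{q}}^-={\mathfrak{l}}_{\mathbb{C}}+{\mathfrak{u}}^-$, producing the contragredient (equivalently, the highest weight partner), is a reasonable fleshing-out of the paper's terse \lq\lq the same argument applies\rq\rq, and is consistent with the stated duality of the two minimal representations.

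One step you should not leave implicit: the assertion that the constants \emph{generate} $\Sol(D,\Delta)_{K\text{-finite}}$, equivalently that this module is irreducible, is not automatic from the presence of a lowest $K$-type. The paper proves it in Proposition \ref{prop:irr} by combining the multiplicity-free ladder structure with unitarizability (complete reducibility) and the observation that ${\mathfrak{p}}_{\mathbb{C}}W_{\ell}$ only reaches $W_{\ell\pm1}$, then excluding finite-dimensional constituents via the infinitesimal character $(m-1,m-2,\dots,0,-1)$. Since you are deriving the corollary from Theorem \ref{thm:23021140}, you may of course import this irreducibility, but as written your sentence \lq\lq since the constants are the lowest $K$-type and generate the module\rq\rq\ asserts rather than proves it.
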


\begin{remark}
Kobayashi--{\O}rsted \cite{KOr98}
 proposed yet another complex-geometric realization
 of the minimal representation of $O(p,q)$
 by using Dolbeault cohomologies 
 on a non-compact complex manifold
 when $p+q$ is even
 and $p,q \ge 2$.  
Our complex manifold $X$ is different from the one in \cite{KOr98}
 with $p=2$.

Last but not least, 
 the representation of $G$ on the Fr{\'e}chet space 
 of Dolbeault cohomologies
 in Theorem \ref{thm:23021140} contains
 the unique Hilbert space
 as its dense subspace
 on which $G$ acts as an irreducible unitary representation, 
 to be denoted by $\pi$, 
 by \cite{xvu} and by Proposition \ref{prop:irr}.  
In his paper \cite{vanDijk}, 
 van Dijk classified \lq\lq{generalized Gelfand pairs}\rq\rq\ $(G,H)$
 under the assumption
 that $G/H$ is a semisimple symmetric space
 of rank one.  
In particular, 
 $(G,H)=(SO_0(2,2n), SO_0(2,2n-1))$
 is a generalized Gelfand pair, 
 and thus $\dim \operatorname{Hom}_G(\pi, {\mathcal{D}}'(G/H)) \le 1$.  
\end{remark}

\section{Penrose transform}
\label{sec:Pen}
The morphism in Theorem \ref{thm:23021140}
 is the higher-dimensional Penrose transform, 
 of which we review quickly from \cite{xhseki02}
 the definition 
 adapted to our specific situation.

Let $K$ be a maximal compact subgroup of a linear reductive Lie group $G$, 
 $\theta$ a complexified Cartan involution, 
 ${\mathfrak{q}}={\mathfrak{l}}_{\mathbb{C}}+{\mathfrak{u}}$
 a $\theta$-stable parabolic subalgebra of ${\mathfrak{g}}_{\mathbb{C}}$, 
 and $X=G/L$ the open $G$-orbit in the flag variety 
 $G_{\mathbb{C}}/Q$
 through the origin $o=e Q$, 
 see {\it{e.g.,}} \cite{xkono}.  
We consider a compact submanifold
 $C:=K/L \cap K \simeq K_{\mathbb{C}}/Q \cap K_{\mathbb{C}}$ in $G/L$, 
 and write $\iota \colon C \hookrightarrow X$
 for the natural embedding.  
Let $S$ denote the complex dimension of $C$, 
 and $T$ be a maximal torus of $L \cap K$, 
 hence that of $K$, too.  
We take a positive system $\Delta^+({\mathfrak{k}}_{\mathbb{C}}, {\mathfrak{t}}_{\mathbb{C}})$
 containing the weights $\Delta({\mathfrak{u}} \cap {\mathfrak{k}}_{\mathbb{C}}, {\mathfrak{t}}_{\mathbb{C}})$.  
For a dominant character $\mu$ of $T$, 
 we let $V_{\mu}$ denote the irreducible $K$-module
 with highest weight $\mu$, 
 and form a $G$-equivariant vector bundle
 ${\mathcal{V}}_{\mu}:=G \times_K V_{\mu}$
 over the Riemannian symmetric space $G/K$.  
We write $\ell_g$ for the action of $g \in G$
 on the line bundle ${\mathcal{L}}_{\lambda}$ over $G/L$.  
Then the natural map
\begin{equation*}
{\mathcal{E}}^{0,S}(G/L, {\mathcal{L}}_{\lambda}) \times G \to {\mathcal{E}}^{0,S}(K/L \cap K, \iota^{\ast} {\mathcal{L}}_{\lambda}), 
\quad
 (\alpha, g) \mapsto \iota^{\ast}\ell_g^{\ast} \alpha
\end{equation*}
induces the one for Dolbeault cohomologies:
\[
   \dcoh S {G/L, {\mathcal{L}}_{\lambda}} \times G
   \to
   \dcoh S {K/L \cap K, \iota^{\ast} {\mathcal{L}}_{\lambda}},
\quad
  ([\alpha], g) \mapsto [\iota^{\ast} \ell_g^{\ast} \alpha].  
\]
By the Borel--Weil--Bott theorem, 
 the target space is finite-dimensional, 
 and is $K$-isomorphic
 to the irreducible representation $V_{\mu_{\lambda}}$
 as far as $
\mu_{\lambda}:={\mathbb{C}}_{\lambda} \otimes \Lambda^S({\mathfrak{k}}_{\mathbb{C}}/({\mathfrak{q}} \cap {\mathfrak{k}}_{\mathbb{C}}))
$  is $\Delta^+({\mathfrak{k}}_{\mathbb{C}}, {\mathfrak{t}}_{\mathbb{C}})$-dominant.  
In turn, 
 the above map yields a continuous $G$-homomorphism:
\begin{equation*}
  {\mathcal{R}} \colon 
  H_{\overline \partial}^S(G/L, {\mathcal{L}}_{\lambda}) \to C^{\infty}(G/K, {\mathcal{V}}_{\mu_{\lambda}}), 
\quad
  [\alpha] \mapsto (g \mapsto [\iota^{\ast} \ell_g^{\ast} \alpha]), 
\end{equation*}
 which is referred 
 to as a (higher-dimensional) Penrose transform, 
 or to the Penrose transform in short
 (\cite[Thm.\ 2.6]{xhseki02}).

In our setting,
 the compact submanifold $C \simeq SO(2m)/U(m)$, 
 $S=m(m-1)$, 
 and ${\mathfrak{u}}=({\mathfrak{u}} \cap {\mathfrak{k}}_{\mathbb{C}}) \oplus ({\mathfrak{u}} \cap {\mathfrak{p}}_{\mathbb{C}})$
 with 
\begin{equation}
\label{eqn:uk}
\Delta({\mathfrak{u}}\cap{\mathfrak{k}}_{\mathbb{C}})
=\{e_i + e_j: 1 \le i < j \le m\}, 
\quad
\Delta({\mathfrak{u}}\cap{\mathfrak{p}}_{\mathbb{C}})
=\{e_0 + e_j: 1 \le j \le m\}.  
\end{equation}

Then halves the sums of the roots
 in $\Delta({\mathfrak{u}}, {\mathfrak{t}}_{\mathbb{C}})$
 and $\Delta({\mathfrak{u}} \cap {\mathfrak{k}}_{\mathbb{C}}, {\mathfrak{t}}_{\mathbb{C}})$ are given respectively by 
\[
\rho({\mathfrak{u}})
=\frac m 2 {\bf{1}}_{m+1}, 
\quad
 \rho({\mathfrak{u}} \cap
 {\mathfrak{k}}_{\mathbb{C}})=0 \oplus \frac{m-1}2{\bf{1}}_m
\]
in the standard coordinates
 ${\mathfrak{t}}_{\mathbb{C}}^{\ast} \simeq {\mathbb{C}}^{m+1}$.  
Since $\mu_{\lambda}=(m-1, 0, \dots,0)$
 for $\lambda=(m-1) {\bf{1}}_{m+1}$, 
  one has a $G$-intertwining operator
\begin{equation}
\label{eqn:defPen}
  {\mathcal{R}} \colon \dcoh{m(m-1)}{G/L, {\mathcal{L}}_{m-1}} \to 
C^{\infty}(G/K, {\mathcal{V}}_{m-1}).  
\end{equation}

Here, by an abuse of notation 
 we write ${\mathcal{V}}_{m-1}$
 for the line bundle ${\mathcal{V}}_{(m-1, 0,\dots,0)}$
 over $G/K$, 
 which is isomorphic to $(\Omega_{G/K})^{\frac{m-1}{2m}}$
 where $\Omega_{G/K}$ is the canonical bundle of $G/K$.  
Trivializing the line bundle ${\mathcal{V}}_{m-1}$
 via the Harish-Chandra realization 
 $G/K \overset \sim \longrightarrow D \subset {\mathfrak{p}}_-$
 of the Hermitian symmetric space, 
 one may identify ${\mathcal{F}}(G/K, {\mathcal{V}}_{m-1})$
 with ${\mathcal{F}}(D)$
 for ${\mathcal{F}}=C^{\infty}$ or ${\mathcal{O}}$.

Now our theorem is formulated as follows.  
\begin{theorem}
\label{thm:main}
The higher-dimensional Penrose transform ${\mathcal{R}}$ in \eqref{eqn:defPen} is injective, 
 and its image coincides with $\Sol(D, \Delta)$ 
 via the identification 
 ${\mathcal{O}}(D) \simeq {\mathcal{O}}(G/K, {\mathcal{V}}_{m-1})$
 where $G=SO_0(2,2m)$.  
Moreover, 
 ${\mathcal{R}}$ gives a $G$-equivariant topological isomorphism:
\[
  {\mathcal{R}} \colon 
  \dcoh {m(m-1)}{G/L, {\mathcal{L}}_{m-1}} \overset \sim \longrightarrow \Sol(D, \Delta).  
\]
\end{theorem}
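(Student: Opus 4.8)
The plan is to prove Theorem \ref{thm:main} by combining three essentially separate ingredients: an algebraic computation of the underlying Harish-Chandra module via Zuckerman's derived functor construction, a comparison of this module with the solution space of the holomorphic Laplacian on $D$ seen as a $({\mathfrak g},K)$-module, and a topological upgrade from $K$-finite vectors to the full Fréchet spaces. First I would identify the Dolbeault cohomology $\dcoh{m(m-1)}{G/L,{\mathcal L}_{m-1}}$ with (the maximal globalization of) Zuckerman's module $A_{\mathfrak q}(\lambda)$ for $\lambda=(m-1){\bf 1}_{m+1}$, using the standard fact that for the open $G$-orbit $X=G/L$ in the flag variety the $\bar\partial$-cohomology in the appropriate degree $S=m(m-1)=\dim_{\mathbb C}C$ realizes this module; here $\lambda-\rho({\mathfrak u})=(m-1-\tfrac m2){\bf 1}_{m+1}=\tfrac{m-2}{2}{\bf 1}_{m+1}$ sits outside the good range, so I cannot invoke the general vanishing/irreducibility machinery and must argue directly. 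The key input is the explicit $K$-type formula: by the Borel--Weil--Bott computation recalled in Section \ref{sec:Pen}, the \lq\lq bottom layer\rq\rq\ $K$-type is $V_{\mu_\lambda}=V_{(m-1,0,\dots,0)}$, and one shows by a $\mathfrak u$-cohomology / Blattner-type argument that the full $K$-type decomposition of the cohomology consists precisely of the $K$-types of a minimal representation of $SO_0(2,2m)$ — that is, the harmonic-like string $V_{(k+m-1,0,\dots,0)}$, $k\ge0$ (equivalently the spherical harmonics of degree $k$ on $S^{2m-1}$, after accounting for the $SO(2)$ central character), each with multiplicity one.

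Next I would compute the $K$-type content of the target $\Sol(D,\Delta)$. Trivializing ${\mathcal V}_{m-1}$ via the Harish-Chandra embedding identifies $\mathcal O(D,{\mathcal V}_{m-1})$ with $\mathcal O(D)$, and the $K$-finite holomorphic functions on $D$ decompose, by the Hua--Schmid expansion for the Lie ball, into $K$-types $V_\mu$ indexed by the relevant subset of $\widehat{K}$; imposing $\Delta f=0$ kills all but the harmonic $K$-types, leaving exactly the same one-parameter string $V_{(k+m-1,0,\dots,0)}$, $k\ge0$, each with multiplicity one. So as $({\mathfrak g},K)$-modules the $K$-finite parts of source and target have identical $K$-type multiplicities. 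I would then show $\mathcal R$ is nonzero (it is nonzero on the bottom $K$-type by the Borel--Weil identification of the target of the fibre integration with $V_{\mu_\lambda}$, which is exactly the construction of $\mathcal R$), that its image lands in $\Sol(D,\Delta)$ (item (a): the range satisfies $\Delta f=0$ — this follows because $\Delta$ corresponds to a $G$-invariant operator annihilating the minimal-representation $K$-types, or directly from the Penrose-transform characterization of the image via the kernel of a $G$-invariant differential operator, cf. \cite{xhseki02}), and that $\mathcal R$ is $G$-equivariant and continuous by construction. Since $\mathcal R$ is a nonzero $({\mathfrak g},K)$-map between two modules whose $K$-finite parts have the same multiplicity-free $K$-spectrum, and since that common module is irreducible (it is a minimal representation; irreducibility I would get from Proposition \ref{prop:irr} or from the Joseph-ideal characterization), Schur's lemma forces $\mathcal R$ to be a bijection on $K$-finite vectors: injectivity (c) and surjectivity (b) on the algebraic level. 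Non-vanishing (d) is then automatic since the bottom $K$-type is there, and cohomological purity (e) follows from the same $\mathfrak u$-cohomology computation showing the cohomology in degrees $\ne m(m-1)$ has no $K$-types at all.

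Finally I would pass from $K$-finite vectors to the topological statement (f). The source $\dcoh{m(m-1)}{G/L,{\mathcal L}_{m-1}}$ carries a Fréchet topology by the closed-range theorem for $\bar\partial$ \cite{xwong}, and $\Sol(D,\Delta)$ carries the topology of uniform convergence on compacta, under which it is a closed subspace of the Fréchet space $\mathcal O(D)$, hence Fréchet. Both are continuous $G$-representations; the source is the maximal globalization of $A_{\mathfrak q}(\lambda)$ (Dolbeault cohomology on a measurable open orbit gives the maximal globalization, by results of Wong and the Hecht--Taylor--type theory), while $\Sol(D,\Delta)$, being a closed $G$-subspace of the maximal globalization $\mathcal O(D)=\mathcal O(G/K,{\mathcal V}_{m-1})$ of a principal-series-type module, is itself the maximal globalization of its Harish-Chandra module — namely the same minimal representation. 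Since $\mathcal R$ is a continuous $G$-map inducing an isomorphism on the common Harish-Chandra module, the Casselman--Wallach uniqueness of the maximal globalization (the maximal-globalization functor is exact and faithful, and a morphism of maximal globalizations inducing an isomorphism on $K$-finite vectors is a topological isomorphism) gives that $\mathcal R$ is a topological $G$-isomorphism, completing the proof. The main obstacle is the first step: since $\lambda$ is outside the good range, establishing the precise $K$-type decomposition of the Dolbeault cohomology and the purity statement (e) cannot be done by citing standard vanishing theorems and must be carried out by a hands-on $\mathfrak u$-cohomology / algebraic-representation-theory argument (this is exactly where the \lq\lq techniques from algebraic representation theory\rq\rq\ advertised in the abstract enter), and it is also where one must rule out the pathologies (c)$'$, (d)$'$ that occur for other parameters.
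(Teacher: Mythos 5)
Your proposal follows essentially the same route as the paper: the generalized Blattner formula gives the multiplicity-free $K$-type string of the cohomology (Proposition \ref{prop:23021120}), the harmonic-polynomial decomposition gives the identical string for $\Sol(D,\Delta)$ (Proposition \ref{prop:3.1}), and irreducibility (Proposition \ref{prop:irr}) together with non-vanishing of ${\mathcal{R}}$ on the bottom $K$-type and the $K$-multiplicity-freeness of ${\mathcal{O}}(G/K,{\mathcal{V}}_{m-1})$ force a bijection on $K$-finite vectors, which the maximal-globalization argument upgrades to a topological isomorphism. One small caveat: your side remark that cohomological purity in degrees $\ne m(m-1)$ follows from the same $\mathfrak{u}$-cohomology computation is not right as stated (the Blattner formula only controls the alternating sum), and the paper instead gets the vanishing for $i\ne 0$ from Wong's theorem, which does apply here because $\lambda-\rho({\mathfrak{u}})$ lies in the weakly fair range --- but purity is not actually needed for Theorem \ref{thm:main} itself.
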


\begin{remark}
\label{rem:23}
For $G=SO_0(2,2m+1)$ or its covering group, 
 the higher-dimensional Penrose transform 
 can be defined in a similar geometric setting, 
 however, 
 we do not expect an analogous theorem holds.  
For instance, 
 if $m=1$, 
 then via the double covering 
 $Sp(2,{\mathbb{R}}) \to SO_0(2,3)$, 
 one sees
 that the range of the Penrose transform does not satisfy
 the differential equation
 $\Delta f=0$
 as was proved in \cite{xhseki02}
 in the $Sp(n,{\mathbb{R}})$ case.  
 \end{remark}

\section{Differential operators on $G/K$}

In this section 
 we analyze the space $\Sol(D, \Delta)$ 
 and see that it is realized naturally as a $G$-submodule
 of ${\mathcal{O}}(G/K, {\mathcal{V}}_{m-1})$
 and compute its $K$-type formula.

We begin with some useful results about parabolic Verma modules.  
Let ${\mathfrak{g}}_{\mathbb{C}}={\mathfrak{so}}(2m+2, {\mathbb{C}})$
 with $m>1$, 
 ${\mathbb{C}}_{\lambda}$ be a character 
 of ${\mathfrak{k}}_{\mathbb{C}}+{\mathfrak{p}}_+$
 that takes the form $(\lambda,0,\dots, 0)$
 on ${\mathfrak{t}}_{\mathbb{C}} \simeq {\mathbb{C}}^{m+1}$
 for $\lambda \in {\mathbb{C}}$.  
By \cite[Lem.\ 10.1]{KS15}, 
 for $\lambda, \nu \in {\mathbb{C}}$, 
 one has
\[
\invHom{{\mathfrak{g}}_{\mathbb{C}}}
{U({\mathfrak{g}}_{\mathbb{C}}) \otimes_{U({\mathfrak{k}}_{\mathbb{C}}+{\mathfrak{p}}_+)} {\mathbb{C}}_{-\nu}}{U({\mathfrak{g}}_{\mathbb{C}}) \otimes_{U({\mathfrak{k}}_{\mathbb{C}}+{\mathfrak{p}}_+)} {\mathbb{C}}_{-\lambda}
}
 \ne \{0\}
\]
if and only if
\begin{equation}
\label{eqn:lmdnu}
  (\lambda,\nu)=(m - \ell, m + \ell)\qquad
\text{for some $\ell \in {\mathbb{N}}$.}
\end{equation}

In turn, 
 it follows from the duality theorem, 
 see {\it{e.g.,}}
\cite[Thm.\ 2.12]{KP1}
 applied to $G_{\mathbb{C}}=G_{\mathbb{C}}'=SO(2m+2, {\mathbb{C}})$, 
 that there is a holomorphic ${\mathfrak{g}}_{\mathbb{C}}$-intertwining 
 differential operator
 between the ${\mathfrak{g}}_{\mathbb{C}}$-equivariant sheaves 
 ${\mathcal{O}}(G_{\mathbb{C}}/K_{\mathbb{C}} \exp ({\mathfrak{p}}_+), {\mathcal{V}}_{m-\ell})$
 and ${\mathcal{O}}(G_{\mathbb{C}}/K_{\mathbb{C}} \exp ({\mathfrak{p}}_+), {\mathcal{V}}_{m+\ell})$.  
Such an operator is unique up to scalar multiplication, 
 and is given 
 as the $\ell$-th power of the holomorphic Laplacian 
 on the Bruhat open cell
 ${\mathfrak{p}}_-$.  
\begin{equation}
\label{eqn:Lapl}
  \Delta^{\ell} \colon {\mathcal{O}}(G_{\mathbb{C}}/K_{\mathbb{C}} \exp ({\mathfrak{p}}_+), {\mathcal{V}}_{m-\ell}) 
\to 
{\mathcal{O}}(G_{\mathbb{C}}/K_{\mathbb{C}} \exp ({\mathfrak{p}}_+), 
{\mathcal{V}}_{m + \ell}), 
\end{equation}
 see \cite{KP2}.  
See also Remark \ref{rem:33} for analogous operators
 in another real form.

In the case $\ell =1$, 
 one has a $G$-intertwining operator
\begin{equation}
\label{eqn:cpxLap}
   \Delta \colon {\mathcal{O}}(D, {\mathcal{V}}_{m-1})
          \to {\mathcal{O}}(D, {\mathcal{V}}_{m+1}).  
\end{equation}

Here is the $K$-structure of the kernel:
\begin{proposition}
\label{prop:3.1}
The kernel of the holomorphic Laplacian $\Delta$ is a $G$-submodule
 of ${\mathcal{O}}(D, {\mathcal{V}}_{m-1})$
 with the following $K$-structure:
\[
  \Sol(D,\Delta)_{K\operatorname{-finite}}
  \simeq
  \bigoplus_{\ell=0}^{\infty}
  {\mathbb{C}}_{\ell+m-1} \boxtimes \Kfdrep{SO(2m)}{\ell, 0, \dots, 0}.  
\]
\end{proposition}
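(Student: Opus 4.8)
The plan is to pass to the polynomial model of $\mathcal{O}(D)$ and reduce the statement to the classical theory of spherical harmonics on $\mathbb{C}^{2m}$. Since the operator $\Delta$ of \eqref{eqn:cpxLap} is $G$-intertwining, hence $K$-intertwining, its kernel $\Sol(D,\Delta)$ is automatically a $G$-submodule of $\mathcal{O}(D,\mathcal{V}_{m-1})$, and passing to $K$-finite vectors commutes with taking this kernel. To compute $\mathcal{O}(D,\mathcal{V}_{m-1})_{K\text{-finite}}$ I would first observe that every $K$-finite holomorphic function on $D$ is a polynomial: in the Harish-Chandra realization $D\subset\mathfrak{p}_-$ the group $K$ acts linearly and fixes the origin, the degree-$n$ component of the Taylor expansion at $0$ lies in $S^n(\mathfrak{p}_-^{\ast})$, which is a single $SO(2)$-weight space of weight $n$, so distinct degrees carry distinct $SO(2)$-weights and a $K$-finite function has finite Taylor expansion. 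Using the Killing form to identify $\mathfrak{p}_-^{\ast}\simeq\mathfrak{p}_+$ as $K$-modules and incorporating the automorphy factor of $\mathcal{V}_{m-1}$ at the base point — which, because $\mathcal{V}_{m-1}\simeq(\Omega_{G/K})^{(m-1)/(2m)}$, contributes the $SO(2)$-character of weight $m-1$ together with the trivial $SO(2m)$-module — one obtains a $K$-isomorphism
\[
  \mathcal{O}(D,\mathcal{V}_{m-1})_{K\text{-finite}}
  \;\simeq\;
  S(\mathfrak{p}_+)\otimes\mathbb{C}_{m-1}
  \;=\;
  \bigoplus_{n\ge 0}\bigl(S^n(\mathfrak{p}_+)\otimes\mathbb{C}_{m-1}\bigr).
\]

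Next I would identify $\Delta$ in this model. By the discussion around \eqref{eqn:Lapl}--\eqref{eqn:cpxLap} (\cite[Thm.~2.12]{KP1}, \cite{KP2}), under the Harish-Chandra trivialization of $\mathcal{V}_{m-1}$ and $\mathcal{V}_{m+1}$ the operator $\Delta$ is exactly the flat holomorphic Laplacian $\sum_{i=1}^{2m}\partial^2/\partial z_i^2$ on $\mathfrak{p}_-\simeq\mathbb{C}^{2m}$; this is precisely why the line-bundle parameters $m\mp1$ occur, so that the principal symbol is the constant-coefficient Laplacian and no curvature term intervenes, while the $\mathbb{C}_{m-1}$-twist commutes through. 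Restricted to $S^n(\mathfrak{p}_+)\simeq S^n(\mathbb{C}^{2m})$, this is the classical Laplacian, and by the theory of spherical harmonics its kernel is the space of harmonic polynomials of degree $n$, an irreducible $SO(2m)$-module of highest weight $(n,0,\dots,0)$ occurring with multiplicity one, i.e. $\simeq\Kfdrep{SO(2m)}{n,0,\dots,0}$.

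It then remains to track the $SO(2)\times SO(2m)$-weights. As $\Delta(\mathfrak{p}_+,\mathfrak{t}_{\mathbb{C}})=\{e_0\pm e_j:1\le j\le m\}$, every generator of $\mathfrak{p}_+$ has $SO(2)$-weight $+1$, so $S^n(\mathfrak{p}_+)$ has $SO(2)$-weight $n$, and after the twist by $\mathbb{C}_{m-1}$ the harmonic summand acquires $SO(2)$-weight $n+m-1$. Summing over $n$ and relabelling $\ell=n$ yields
\[
  \Sol(D,\Delta)_{K\text{-finite}}
  \;\simeq\;
  \bigoplus_{\ell=0}^{\infty}
  \mathbb{C}_{\ell+m-1}\boxtimes\Kfdrep{SO(2m)}{\ell,0,\dots,0},
\]
as asserted.

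Given the inputs recalled above — in particular that $\Delta$ of \eqref{eqn:cpxLap} is the flat holomorphic Laplacian in the Harish-Chandra trivialization — the remaining argument is essentially bookkeeping. The genuinely substantive point, to which §3 is devoted, is precisely that identification and the role of the parameters $m\mp1$; within the present proof the only step demanding care is the normalization of the $\mathbb{C}_{m-1}$-twist (equivalently, the automorphy factor of $\mathcal{V}_{m-1}$ at the base point), since this is exactly what fixes the shift by $m-1$ in the final formula. The polynomial model of $\mathcal{O}(D)_{K\text{-finite}}$ and the $SO(2m)$-harmonic decomposition of $S^{\bullet}(\mathbb{C}^{2m})$ are classical.
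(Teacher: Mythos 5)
Your proposal is correct and follows essentially the same route as the paper: identify $\mathcal{O}(D,\mathcal{V}_{m-1})_{K\text{-finite}}$ with $\operatorname{Pol}(\mathfrak{p}_-)\otimes(\mathbb{C}_{m-1}\boxtimes\mathbf{1})$, apply the classical decomposition of $S^{\ell}(\mathbb{C}^{2m})$ into $SO(2m)$-harmonics (irreducible of highest weight $(\ell,0,\dots,0)$ since $m>1$), and track the $SO(2)$-weight shift by $m-1$. You supply slightly more detail than the paper on why $K$-finite holomorphic sections are polynomials and on the identification of $\Delta$ with the flat Laplacian, but these are the same ingredients the paper takes as given from \eqref{eqn:cpxLap} and its surrounding discussion.
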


\begin{remark}
As we shall see in Theorem \ref{thm:main} and Proposition \ref{prop:irr}, 
 the $G$-module $\Sol(D, \Delta)$ is irreducible.  
\end{remark}

\begin{proof}[Proof of Proposition \ref{prop:3.1}]
Let $\operatorname{Pol}^{\ell}({\mathfrak{p}}_{-})$ denote 
 the space of homogeneous polynomials in ${\mathfrak{p}}_-$ of degree $\ell$.  
We set 
\[
  {\mathcal{H}}({\mathfrak{p}}_{-})
  :=
  \{f \in \operatorname{Pol}({\mathfrak{p}}_{-})
  :
  \Delta f =0\}, 
\quad
  {\mathcal{H}}^{\ell}({\mathfrak{p}}_{-})
  := 
  {\mathcal{H}}({\mathfrak{p}}_{-}) \cap \operatorname{Pol}^{\ell}({\mathfrak{p}}_{-}).   
\]
Then $SO(2m)$ acts irreducibly 
 on ${\mathcal{H}}^{\ell}({\mathfrak{p}}_{-})$
 for every $\ell \in {\mathbb{N}}$
 when $m>1$, 
 and its highest weight is given by $(\ell, 0, \dots, 0)$.  
Since the first factor $SO(2)$ of $K$ acts 
 on $\operatorname{Pol}^{\ell}({\mathfrak{p}}_{-}) \simeq S^{\ell}({\mathfrak{p}}_+)$
 as the character ${\mathbb{C}}_{\ell}$, 
 the irreducible decomposition of the $K$-module
${\mathcal{H}}({\mathfrak{p}}_{-})$ is given as $\bigoplus_{\ell=0}^{\infty}
{\mathcal{H}}^{\ell}({\mathfrak{p}}_{-}) \simeq \bigoplus_{\ell=0}^{\infty} {\mathbb{C}}_{\ell} \boxtimes \Kfdrep {SO(2m)}{\ell, 0, \dots, 0}$.  
Now the proposition follows from the observation
 that the $K$-module structure 
 on the underlying $({\mathfrak{g}}_{\mathbb{C}}, K)$-module 
 ${\mathcal{O}}(G/K, {\mathcal{V}}_{m-1})_{K\operatorname{-finite}}$
 is given as the multiplicity-free direct sum $\operatorname{Pol}({\mathfrak{p}}_{-}) \otimes ({\mathbb{C}}
_{m-1} \boxtimes {\bf{1}}) \simeq 
 \overset{\infty}{\underset{\ell=0}\bigoplus}S^{\ell}({\mathfrak{p}}_+)\otimes ({\mathbb{C}}
_{m-1} \boxtimes {\bf{1}})$.  
\end{proof}

\begin{remark}
\label{rem:33}
Let $P_{\mathbb{R}}$ a minimal parabolic subgroup
 of $G_{\mathbb{R}}=SO_0(2m+1,1)$.  
Then $G=SO_0(2,2m)$ and $G_{\mathbb{R}}=SO_0(2m+1, 1)$
 have the common complexifications
 $G_{\mathbb{C}}=SO(2m+2, {\mathbb{C}})$.  
We may regard $(G_{\mathbb{R}}, P_{\mathbb{R}})$
 as a real from of $(G_{\mathbb{C}}, Q)$.  
Let $I(\lambda)=\operatorname{Ind}_{P_{\mathbb{R}}}^{G_{\mathbb{R}}}({\mathbb{C}}_{\lambda})$
 be the unnormalized spherical principal series representation 
 induced from a character ${\mathbb{C}}_{\lambda}$
 for $\lambda \in {\mathbb{C}}$.  
Our parametrization is taken to be the same
 with the one in the monograph \cite{KS15}
 so that the trivial representation ${\bf{1}}$ of $G_{\mathbb{R}}$
 occurs as the unique subrepresentation $I(0)$
 and also as the unique quotient of $I(n)$, 
 see \cite[(2.11)]{KS15}.  
Then the Knapp--Stein intertwining operator
$
I(\lambda) \to I(n-\lambda)
$
 has a pole at $\lambda \in\{\frac n 2, \frac n 2-1, \frac n 2-2, \dots\}$
 and its residue is a scalar multiple
 of the $\ell$-th power of the (Riemannian) Laplacian $\Delta$
 on the open Bruhat cell ${\mathbb{R}}^n$ 
 \cite[(4.29) and Remark 10.3]{KS15}
 if we set $\ell:=\frac n 2-\lambda \in {\mathbb{N}}$.  
In particular, 
 for $n=2m$, 
 the residue operator
\[
  \Delta \colon I(m-1) \to I(m+1)
\]
 is a $G_{\mathbb{R}}$-intertwining operator. 
This operator may be regarded
 as a \lq\lq{real form}\rq\rq\ of the holomorphic differential operator
 \eqref{eqn:cpxLap}.  
\end{remark}

\begin{remark}
\label{rem:23021609}
With the notation of the classification \cite{EHW83}
 of irreducible unitarizable lowest weight modules 
 for ${\mathfrak{g}}={\mathfrak{s o}}(2,n)$, 
 one has $A(\lambda_0)=\frac n 2$
 and $B(\lambda_0)=n-1$
 if $\lambda_0=(1-n)e_0$.   
Accordingly, 
 the lowest weight $({\mathfrak{g}}_{\mathbb{C}}, K)$-module $L(-(\lambda_0+ze_0))=L((n-1-z)e_0)$
 is unitarizable
 if and only if $z \le 0$
 or $z \in \{\frac n 2, n-1\}$.  
Its ${\mathfrak{Z}}({\mathfrak{g}}_{\mathbb{C}})$-infinitesimal character is given by $(n-1-z)e_0 - \rho_G$. 
The underlying $({\mathfrak{g}}_{\mathbb{C}}, K)$-module of $\Sol(D, \Delta)$
 is isomorphic
 to $L((\frac n 2-1)e_0)$ corresponding
 to the first reduction point $z=A(\lambda_0)$.   
\end{remark}

\section{Generalized Blattner formula}
\label{sec:Blattner}

In this section we examine the $K$-type formula
 of the Dolbeault cohomology group.  

We recall $K=SO(2) \times SO(2m)$.  
Irreducible $K$-modules are parametrized by 
 $\mu_0 \in {\mathbb{Z}}$
 and $\mu =(\mu_1, \dots, \mu_m) \in {\mathbb{Z}}^m$
satisfying 
 $\mu_1 \ge \cdots \ge \mu_{m-1} \ge |\mu_m|$.  
We write ${\mathbb{C}}_{\mu_0} \boxtimes F^{SO(2m)}(\mu)$
 for the irreducible $K$-module
 with highest weight $(\mu_0; \mu_1, \dots, \mu_m)$.

\begin{proposition}
\label{prop:23021120}
As a $K$-module, 
 $\dcoh{m(m-1)}{G/L, {\mathcal{L}}_{-1}}_{K\operatorname{-finite}}$ is 
multiplicity-free, 
 and its $K$-type formula is given by 
\[
   \bigoplus_{\ell=0}^{\infty} {\mathbb{C}}_{\ell+m-1} \boxtimes \Kfdrep{SO(2m)}{\ell, 0, \dots,0}.  
\]
\end{proposition}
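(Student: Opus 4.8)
The plan is to compute the $K$-type formula of the Dolbeault cohomology $\dcoh{m(m-1)}{G/L,\mathcal{L}_{-1}}$ by relating it, via the identification $C\simeq K_{\mathbb C}/Q\cap K_{\mathbb C}$ and the maximal compact subgroup picture, to the cohomology of a $K$-equivariant holomorphic line bundle. The underlying $(\mathfrak g_{\mathbb C},K)$-module of $\dcoh{S}{G/L,\mathcal{L}_{\lambda}}$ is, by the standard Dolbeault description of cohomologically induced modules, computed via a Blattner-type (Zuckerman) formula: as a $K$-module it is given by the Euler characteristic
\[
\sum_{q}(-1)^{q}\,\dcoh{q}{G/L,\mathcal{L}_{\lambda}}
\;=\;
\bigoplus_{j}\ (-1)^{j}\ \operatorname{Ind}_{L\cap K}^{K}\!\left(\mathbb{C}_{\lambda}\otimes\Lambda^{j}(\mathfrak{p}\cap\mathfrak{u})^{*}\otimes\Lambda^{\mathrm{top}}(\mathfrak{u}\cap\mathfrak{k})\right),
\]
with appropriate $\rho$-shifts, and then one has to control the degrees in which cohomology is concentrated. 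The first step is to write this $S^{\bullet}(\mathfrak{p}_{-}\cap\mathfrak{u})\otimes\Lambda^{\bullet}$-expression explicitly using $\Delta(\mathfrak u\cap\mathfrak p_{\mathbb C})=\{e_0+e_j\}$ and $\Delta(\mathfrak u\cap\mathfrak k_{\mathbb C})=\{e_i+e_j\}$ from \eqref{eqn:uk}, together with the computed half-sums $\rho(\mathfrak u)=\tfrac m2\mathbf 1_{m+1}$ and $\rho(\mathfrak u\cap\mathfrak k_{\mathbb C})=0\oplus\tfrac{m-1}2\mathbf 1_m$.

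The key technical step is to show that the alternating sum collapses to a genuine (non-alternating) sum concentrated in a single degree. Because the parameter $\lambda=-1$ corresponding to $\mathcal L_{-1}$ is outside the good range, I cannot simply invoke vanishing theorems from \cite{xvu} or Beilinson--Bernstein; instead I would argue as follows. Restricting to $C$, the line bundle $\iota^{*}\mathcal L_{-1}$ over $C\simeq SO(2m)/U(m)$ has a determined weight, and the cohomology of $\mathcal L_{\lambda}$ over $G/L$ is, at the level of $K$-types, governed by $H^{\bullet}(\mathfrak u\cap\mathfrak p_{\mathbb C};\ \mathbb C_{\lambda}\otimes S^{\bullet}(\mathfrak p_{-}\cap\mathfrak u))$ coupled with Borel--Weil--Bott on $C$. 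One would run Kostant's theorem fibre-wise: for each $SO(2m)$-type appearing in $S^{\bullet}(\mathfrak p_{-}\cap\mathfrak u)\otimes\mathbb C_{\lambda}$, determine the Weyl group element $w$ (of minimal length) making the $\rho$-shifted weight dominant, and check that the length $\ell(w)$ is constant and equal to $m(m-1)$ for exactly the types $(\ell,0,\dots,0)$ while all other candidate weights are singular (hence contribute zero). This singularity check is where \eqref{eqn:lmdnu} and the parabolic Verma module non-existence statement enter: the absence of extra maps constrains which infinitesimal characters, hence which Weyl chamber walls, can be hit, forcing the cohomology to be pure and multiplicity-free.

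The main obstacle I anticipate is precisely this purity/concentration argument at the singular, non-good parameter: proving that every $K$-type other than $\mathbb C_{\ell+m-1}\boxtimes F^{SO(2m)}(\ell,0,\dots,0)$ sits on a singular weight and therefore is killed, and that the surviving types all appear in the \emph{same} cohomological degree $m(m-1)$ with multiplicity one. A clean way to finish, once the alternating sum is shown to have all its mass in degree $m(m-1)$, is to match the resulting non-negative $K$-character term by term with the list $\bigoplus_{\ell\ge0}\mathbb C_{\ell+m-1}\boxtimes F^{SO(2m)}(\ell,0,\dots,0)$ already identified in Proposition \ref{prop:3.1}; the combinatorial identity needed is a standard $\mathfrak{so}(2m)$ branching/Pieri computation for $S^{\bullet}$ of the standard representation tensored with exterior powers, which I would carry out but not reproduce here in detail. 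Lower-degree vanishing $\dcoh{q}{G/L,\mathcal L_{-1}}=0$ for $q<m(m-1)$ then follows because the Euler characteristic is already non-negative and supported in the top relevant degree, and the bundle $\iota^{*}\mathcal L_{-1}$ on the compact $C$ has its own Borel--Weil--Bott cohomology concentrated in degree $m(m-1)$.
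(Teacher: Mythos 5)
Your overall strategy (compute the $K$-character as an Euler characteristic via a generalized Blattner formula, then identify which Weyl-group terms survive) is the same as the paper's, but there are two genuine gaps. The first concerns degree concentration. You flag as the ``main obstacle'' the need to prove purity at a parameter outside the good range, and you ultimately propose to deduce the lower-degree vanishing from the non-negativity of the Euler characteristic --- but a non-negative alternating sum of multiplicities says nothing about the vanishing of the individual cohomology groups, so that closing step is a non sequitur. The paper sidesteps the issue entirely: the relevant parameter is $\lambda=m-1$ (the subscript $-1$ in the statement is a typo for $m-1$; the asserted $K$-type formula and \eqref{eqn:defPen} are only consistent with $\lambda=m-1$), and $\lambda\mathbf{1}_{m+1}-\rho(\mathfrak u)=(\tfrac m2-1)\mathbf{1}_{m+1}$ lies in the \emph{weakly fair} range. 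The general theory of Wong and Vogan gives $\mathcal R_{\mathfrak q}^{S-i}(\mathbb C_{\lambda-\rho(\mathfrak u)})=0$ for $i\neq 0$ throughout the weakly fair range (only irreducibility and non-vanishing fail to be automatic outside the good range), so the left-hand side of \eqref{eqn:Bla} is literally $\dim\operatorname{Hom}_K(\pi,\mathcal R_{\mathfrak q}^{S}(\mathbb C_{\lambda-\rho(\mathfrak u)}))$ and there is no alternating sum to control. If one instead reads the parameter literally as $\lambda=-1$, it is not weakly fair and your concentration claim would in fact be unjustified.

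The second gap is that the combinatorial heart of the argument is exactly the step you defer (``which I would carry out but not reproduce here in detail''): one must show that for $\pi=\mathbb C_{\mu_0}\boxtimes F^{SO(2m)}(\mu)$, the only $w\in W_K^{\mathfrak l\cap\mathfrak k}$ for which $F^{L\cap K}(w(\mu+\rho_c)-\rho_c)$ can match an $(L\cap K)$-constituent $\mathbb C_{\ell}\boxtimes F^{U(m)}(\ell,0,\dots,0)$ of $S(\mathfrak u\cap\mathfrak p_{\mathbb C})\otimes\mathbb C_{\mu_\lambda}$ is $w=e$, which forces $\mu=(\ell,0,\dots,0)$ and $\mu_0=\ell+m-1$ with multiplicity one; this is Lemma \ref{lem:23021125}, proved by observing that any $w\neq e$ produces a strictly negative last coordinate. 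Your suggestion that \eqref{eqn:lmdnu} and the parabolic Verma module statement supply the needed ``singularity check'' is a misattribution: those facts are used in Section 3 to construct the intertwining operator $\Delta^{\ell}$, not in the Blattner computation. Without the weakly-fair vanishing theorem and without the explicit Weyl-group lemma, the proposal does not yet constitute a proof.
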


\begin{remark}
In connection to the theory of visible actions 
 on complex manifolds (\cite{xrims40}), 
 Kobayashi raised a cohomological multiplicity-free conjecture
 in the general setting 
for branching problems of Zuckerman's derived functor modules
 \cite[Conj.\ 4.2]{xkzuckerman11}.  
We observe
 that both $(G,K)$ 
 and $(G,L)$ are reductive symmetric pairs, 
 hence Proposition \ref{prop:23021120} gives
 an evidence
 of his conjecture.  
\end{remark}

The proof of Proposition \ref{prop:23021120} is based
 on a combinatorial computation
 of a generalized Blattner formula
 as in \cite[Chap.\ 4]{xkupq}. 
Since the statement of this section is 
 purely algebraic, 
 we need only the underlying $({\mathfrak{g}}_{\mathbb{C}}, K)$-modules
 of the cohomologies, 
 namely, 
 Zuckerman's derived functor modules.  
As an algebraic analogue of the Dolbeault cohomology
 with coefficients in a $G$-equivariant holomorphic vector bundle
 over the complex manifold $G/L$, 
Zuckerman introduced a derived functor 
$
   {\mathcal{R}}_{\mathfrak {q}}^j \equiv 
  ({\mathcal{R}}_{\mathfrak {q}}^{{\mathfrak{g}}_{\mathbb{C}}})^j
$
 ($j \in {\mathbb{N}}$) as a cohomological parabolic induction.  
We follow \cite{xkupq, xkronse,xvr} for the normalization
 so that 
 ${\mathcal {R}}_{{\mathfrak {q}}}^j$ is a covariant functor
 from the category of metaplectic $({\mathfrak {l}}_{\mathbb{C}}, 
(L \cap K)\!\widetilde{\hphantom{m}})$-modules
 to the category of $({\mathfrak {g}}_{\mathbb{C}}, K)$-modules
 and that 
 if $\nu \in {\mathfrak {h}}_{\mathbb{C}}^*/W({\mathfrak {l}}_{\mathbb{C}})$
 is the ${\mathfrak{Z}}({\mathfrak {l}}_{\mathbb{C}})$-infinitesimal character
 of an $({\mathfrak{l}}_{\mathbb{C}}, (L \cap K)\!\widetilde{\hphantom{m}})$-module $V$
 then the ${\mathfrak{Z}}({\mathfrak {g}}_{\mathbb{C}})$-infinitesimal character of ${\mathcal{R}}_{{\mathfrak {q}}}^{S}(V)$
 equals 
$
   \nu \in {\mathfrak {h}}_{\mathbb{C}}^*/W({\mathfrak {g}}_{\mathbb{C}})
$.

Retain the setting as in Section \ref{sec:Pen}.  
By an abuse of notation, 
 we write ${\mathbb{C}}_{\lambda-\rho({\mathfrak{u}})}$
 for the metaplectic $({\mathfrak{l}}_{\mathbb{C}}, (L \cap K)\!\widetilde{\hphantom{m}})$-character
 with its differential $\lambda {\bf{1}}_{m+1}-\rho({\mathfrak{u}})$
 when $\lambda \in{\mathbb{Z}}$.  
Since $\lambda{\bf{1}}_{m+1}-\rho({\mathfrak{u}})
=(\lambda-\frac m 2){{\bf{1}}_{m+1}}$, 
 one has the following $({\mathfrak{g}}_{\mathbb{C}}, K)$-isomorphisms
 \cite{xwong}:
\begin{equation}
\label{eqn:DR}
  \dcoh j{G/L, {\mathcal{L}}_{\lambda}}_{K\operatorname{-finite}}
  \simeq
   {\mathcal{R}}_{\mathfrak{q}}^{j} ({\mathbb{C}}_{\lambda-\rho({\mathfrak{u}})})
  =
  {\mathcal{R}}_{\mathfrak{q}}^{j} ({\mathbb{C}}_{(\lambda-\frac m 2){\bf{1}}_{m+1}})
 \qquad\text{for all $j$}, 
\end{equation}
which has ${\mathfrak{Z}}({\mathfrak{g}}_{\mathbb{C}})$-infinitesimal character
$
   (\lambda-\frac m 2){\bf{1}}_{m+1} + \rho_{\mathfrak{l}} 
  = \lambda {\bf{1}}_{m+1} + (0,  -1, \dots, -m).  
$
Then the generalized Blattner formula
 for Zuckerman's derived functor modules asserts
 the following identity
\begin{multline}
\label{eqn:Bla}
  \sum_i (-1)^i \dim \invHom K {\pi}{{\mathcal{R}}_{{\mathfrak{q}}}^{S-i}({\mathbb{C}}_{\lambda-\rho({\mathfrak{u}})})}
\\
=
 \sum_j (-1)^j \dim \invHom {L \cap K} {H^j({\mathfrak{u}} \cap {\mathfrak{k}}_{\mathbb{C}}, \pi)}{S({\mathfrak{u}} \cap {\mathfrak{p}}_{\mathbb{C}}) \otimes {\mathbb{C}}_{\mu_{\lambda}}}, 
\end{multline}
 for any $\pi \in \widehat K$,
 where $S=\dim_{\mathbb{C}}({\mathfrak{u}}\cap{\mathfrak{k}}_{\mathbb{C}})=m(m-1)$
 and $S({\mathfrak{u}} \cap {\mathfrak{p}}_{\mathbb{C}})$ denotes 
 the space of symmetric tensors.

We are particularly interested in the case $\lambda= m-1$
 with $m \ge 2$.  
Then  $\mu_{\lambda}={\mathbb{C}}_{\lambda} \otimes \Lambda^S({\mathfrak{k}}_{\mathbb{C}}/({\mathfrak{q}} \cap {\mathfrak{k}}_{\mathbb{C}}))$ 
 amounts to $(m-1) \oplus 0{\bf{1}}_m$.  
On the other hand, 
 the parameter of the metaplectic $({\mathfrak{l}}_{\mathbb{C}}, (L \cap K)\widetilde{\hphantom{m}})$-character
 ${\mathbb{C}}_{\lambda-\rho({\mathfrak{u}})}
 ={\mathbb{C}}_{(\frac m 2-1){\bf{1}}_{m+1}}$, 
 see \eqref{eqn:DR}, 
 lies 
 in the weakly fair range
 with respect to ${\mathfrak{q}}$, 
 namely, 
 $\langle \lambda{\bf{1}}_{m+1}-\rho({\mathfrak{u}}), \alpha \rangle \ge 0$
 for any $\alpha \in \Delta({\mathfrak{u}})$.  
The general theory \cite{xwong}
 guarantees
 neither the irreducibility 
 nor the non-vanishing of ${\mathcal{R}}_{{\mathfrak{q}}}^{S}({\mathbb{C}}_{\lambda-\rho({\mathfrak{u}})})
$
 in the weakly fair range, 
 but implies
 ${\mathcal{R}}_{{\mathfrak{q}}}^{S-i}({\mathbb{C}}_{\lambda-\rho({\mathfrak{u}})})=0$
 for $i \ne 0$
and the unitarizability of ${\mathcal{R}}_{{\mathfrak{q}}}^{S}({\mathbb{C}}_{\lambda-\rho({\mathfrak{u}})})$
 unless it vanishes.  
In particular,
 the left-hand side of \eqref{eqn:Bla}
 is equal to $\dim\operatorname{Hom}_K(\pi, {\mathcal{R}}_{\mathfrak{q}}^S({\mathbb{C}}_{\lambda-\rho({\mathfrak{u}})}))$.

Let us compute the alternating sum in the right-hand side of \eqref{eqn:Bla}.   We recall $(K, L \cap K)=(SO(2) \times SO(2m), {\mathbb{T}} \times U(m))$.  
The first factor $SO(2) \simeq {\mathbb{T}}$ sits in the center of $K$, 
 hence it does not affect the computation of the Weyl group 
 $W({\mathfrak{k}}_{\mathbb{C}}, {\mathfrak{t}}_{\mathbb{C}})$ below.  
We recall 
 ${\mathfrak{u}} \cap {\mathfrak{k}}_{\mathbb{C}}$ from \eqref{eqn:uk}, 
 and set 
\begin{align}
\notag
\Delta^+(w) :=& \Delta^+({\mathfrak{k}}_{\mathbb{C}}, {\mathfrak{t}}_{\mathbb{C}}) \cap w \cdot \Delta^-({\mathfrak{k}}_{\mathbb{C}}, {\mathfrak{t}}_{\mathbb{C}}),
\\
\notag
\ell(w):=&\# \Delta^+(w), 
\\
\label{eqn:Wlk}
W_K^{{\mathfrak{l}} \cap {\mathfrak{k}}}:=& \{w \in W({\mathfrak{k}}_{\mathbb{C}}, {\mathfrak{t}}_{\mathbb{C}})
:
 \Delta^+(w) \subset \Delta({\mathfrak{u}} \cap {\mathfrak{k}}_{\mathbb{C}})\},
\\
\notag
=& \{w \in W({\mathfrak{k}}_{\mathbb{C}}, {\mathfrak{t}}_{\mathbb{C}})
:
 \text{$w \mu$ is dominant for $\Delta^+({\mathfrak{l}}_{\mathbb{C}} \cap {\mathfrak{k}}_{\mathbb{C}}, {\mathfrak{t}}_{\mathbb{C}})$}
\\
\notag
&
\hphantom{MMMMMMM} \text{whenever $\mu$ is dominant for $\Delta^+({\mathfrak{k}}_{\mathbb{C}}, {\mathfrak{t}}_{\mathbb{C}})$}\}.  
\end{align}

Let $\mu_0 \in {\mathbb{Z}}$ and $\mu \in {\mathbb{Z}}^m$ satisfy
 $\mu_1 \ge \cdots \ge \mu_{m-1} \ge |\mu_m|$.  
For $\pi = {\mathbb{C}}_{\mu_0} \boxtimes F^{SO(2m)}(\mu)$, 
 the $j$-th cohomology group $H^j({\mathfrak{u}} \cap {\mathfrak{k}}_{\mathbb{C}}, \pi)$
 is isomorphic to 
\begin{equation}
\label{eqn:KBBW}
 {\mathbb{C}}_{\mu_0} \boxtimes \bigoplus_{\substack{w \in W_K^{{\mathfrak{l}} \cap {\mathfrak{k}}}
\\ \ell(w) =j
}}
F^{L \cap K}(w(\mu+\rho_c)-\rho_c) 
\end{equation}
as $(L \cap K)$-modules 
by Kostant's Borel--Weil--Bott theorem.

On the other hand, 
since ${\mathfrak{u}} \cap {\mathfrak{p}}_{\mathbb{C}}$ is isomorphic 
 to ${\mathbb{C}}_1 \boxtimes {\mathbb{C}}^m$
 as an $L \cap K \simeq {\mathbb{T}} \times U(m)$-module, 
 one has 
\begin{equation}
\label{eqn:Sup}
S({\mathfrak{u}} \cap {\mathfrak{p}}_{\mathbb{C}}) 
 \simeq
  \bigoplus_{\ell=0}^{\infty} {\mathbb{C}}_{\ell} \boxtimes S^{\ell}({\mathbb{C}}^m)
=
\bigoplus_{\ell=0}^{\infty} {\mathbb{C}}_{\ell} \boxtimes F^{U(m)}(\ell, 0, \dots, 0).  
\end{equation}

To compute the alternating sum 
 in the right-hand side of \eqref{eqn:Bla}, 
 we compare irreducible $(L \cap K)$-modules
 occurring in \eqref{eqn:KBBW}
 and \eqref{eqn:Sup}.  
Then the following combinatorial result plays a key role.

\begin{lemma}
\label{lem:23021125}
For $\mu_0 \in {\mathbb{Z}}$,
 $\mu \in {\mathbb{Z}}^m$ satisfying
 $\mu_1 \ge \cdots \ge \mu_{m-1} \ge |\mu_m|$
 and $w \in W_K^{{\mathfrak{l}} \cap {\mathfrak{k}}}$, 
 the following two conditions are equivalent:
\par\noindent
{\rm{(i)}}\enspace
$\mu_0=\ell+m-1$, 
 $w (\mu+\rho_c)-\rho_c=(\ell, 0, \dots,0)$,  
\par\noindent
{\rm{(ii)}}\enspace
$w =e$, $\mu_0=\ell+m-1$, 
 and $\mu=(\ell, 0, \dots,0)$.  
\end{lemma}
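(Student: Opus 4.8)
The plan is to reduce the asserted equivalence to the standard fact that a regular weight has trivial stabilizer in the Weyl group and is the unique dominant member of its orbit. Here $W({\mathfrak{k}}_{\mathbb{C}},{\mathfrak{t}}_{\mathbb{C}})$ acts through its $SO(2m)$-factor of type $D_m$, and $\rho_c=(m-1,m-2,\dots,1,0)$ on that factor (it is zero on the central $SO(2)$-factor, which therefore plays no role, and $\mu_0$ simply carries over). The implication (ii)~$\Rightarrow$~(i) is immediate, since for $w=e$ one has $w(\mu+\rho_c)-\rho_c=\mu$, so it suffices to establish (i)~$\Rightarrow$~(ii).

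Assume (i), and set $b:=(\ell,0,\dots,0)+\rho_c=(\ell+m-1,\,m-2,\,m-3,\dots,1,0)$, so that (i) reads $w(\mu+\rho_c)=b$ (with $\ell\in{\mathbb{N}}$; equivalently $(\ell,0,\dots,0)$ is $(L\cap K)$-dominant, which is automatic for $w\in W_K^{{\mathfrak{l}}\cap{\mathfrak{k}}}$ because $\mu+\rho_c$ is $\Delta^+({\mathfrak{k}}_{\mathbb{C}},{\mathfrak{t}}_{\mathbb{C}})$-dominant). First I would check that $\mu+\rho_c=(\mu_1+m-1,\dots,\mu_{m-1}+1,\mu_m)$ and $b$ are both regular and dominant: for $\mu+\rho_c$ the condition $\mu_1\ge\dots\ge\mu_{m-1}\ge|\mu_m|$ together with integrality forces the strict chain $\mu_1+m-1>\mu_2+m-2>\dots>\mu_{m-1}+1>|\mu_m|\ge 0$, which is exactly regular dominance in type $D_m$; for $b$ one uses $\ell\ge 0$, giving $\ell+m-1\ge m-1>m-2>\dots>1>0$. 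Since $w(\mu+\rho_c)=b$ puts these two dominant elements in one Weyl orbit and $\mu+\rho_c$ is regular, we conclude $\mu+\rho_c=b$ and that $w$ fixes the regular element $b$; hence $w=e$ and $\mu=b-\rho_c=(\ell,0,\dots,0)$. Concretely, $\mu+\rho_c=b$ is the entrywise identity of strictly decreasing lists, which yields $\mu_1=\ell$ and $\mu_2=\dots=\mu_{m-1}=\mu_m=0$; in particular it forces $|\mu_m|=0$, the feature that rules out all $\mu$ with $\mu_m\ne 0$.

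The one delicate point is the type-$D$ subtlety in the final step: $W(D_m)$ allows only an even number of sign changes, and $b$ (like $\rho_c$) has a vanishing last coordinate, so one must confirm that $\operatorname{Stab}(b)$ is trivial rather than merely a $\pm$-ambiguity. This holds because a regular element has at most one zero coordinate, so any admissible (even) sign change flips at least one nonzero coordinate and alters the vector, while the absolute values are pairwise distinct so no nontrivial permutation fixes it. I expect this verification — together with a quick check of the small cases $\ell=0$ (where $b=\rho_c$) and $m=2$ — to be the only step requiring attention; everything else is bookkeeping.
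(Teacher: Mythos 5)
Your proof is correct, but it follows a different route from the paper's. The paper argues directly from the explicit description of the minimal coset representatives: any $w\in W_K^{{\mathfrak{l}}\cap{\mathfrak{k}}}$ acts by deleting an even number of entries $\mu_{j_1},\dots,\mu_{j_{2r}}$ and appending $-\mu_{j_{2r}},\dots,-\mu_{j_1}$ at the end, so for $r>0$ the last coordinate of $w(\mu+\rho_c)-\rho_c$ equals $-(\mu_{j_1}+m-j_1)\le -1$ (using $j_1\le m-1$ and $\mu_{j_1}\ge|\mu_m|\ge 0$), which is incompatible with $(\ell,0,\dots,0)$; hence $r=0$, $w=e$, and $\mu=(\ell,0,\dots,0)$. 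You instead invoke the general principle that two $\Delta^+({\mathfrak{k}}_{\mathbb{C}},{\mathfrak{t}}_{\mathbb{C}})$-dominant weights in the same Weyl orbit coincide and that a regular element has trivial stabilizer, after checking that both $\mu+\rho_c$ and $b=(\ell,0,\dots,0)+\rho_c$ are dominant and regular for $D_m$ (your handling of the type-$D$ subtlety --- the single zero coordinate and the even-sign-change constraint --- is right, and is indeed the one point that needs care). Your argument is more conceptual and slightly more general, since it never uses the explicit form of the elements of $W_K^{{\mathfrak{l}}\cap{\mathfrak{k}}}$ and would apply verbatim to any $w$ in the full Weyl group $W({\mathfrak{k}}_{\mathbb{C}},{\mathfrak{t}}_{\mathbb{C}})$; the paper's computation buys a completely self-contained one-line verification (the last coordinate goes negative) at the cost of quoting the combinatorial description of the coset representatives. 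Both are complete proofs of the lemma.
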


\begin{proof}
Let us verify (i) $\Rightarrow$ (ii).  
By the definition \eqref{eqn:Wlk}, 
 any element $w \in W_K^{{\mathfrak{l}} \cap {\mathfrak{k}}}$
 is of the form
\[w\mu=(\mu_1, \dots, \widehat{\mu_{j_1}}, \dots, \widehat{\mu_{j_{2r}}}, \dots, \mu_m, -\mu_{j_{2r}}, \dots, -\mu_{j_1})
\]
 for some $1 \le j_1 < j_2 < \cdots < j_{2r} \le m$
 with $0 \le r \le \frac m 2$.  
If $r>0$, 
 then the last component of $w(\mu+\rho_c)-\rho_c$ amounts to 
 $-(\mu_{j_1}+m-j_1)$ 
 which is negative, 
 whence $w(\mu+\rho_c)-\rho_c \ne (\ell, 0, \ldots, 0)$.  
Thus the implication (i) $\Rightarrow$ (ii) follows.  

The converse is implication (ii) $\Rightarrow$ (i) is obvious.  
\end{proof}

\begin{proof}
[Proof of Proposition \ref{prop:23021120}]
Suppose $\pi$ is a $K$-type
 of the form 
$
   {\mathbb{C}}_{\mu_0}\boxtimes \Kfdrep{SO(2m)}{\mu_1, \dots, \mu_m}
$
 with $\mu_1 \ge \cdots \ge \mu_{m-1} \ge |\mu_m|$.  
Then it follows from \eqref{eqn:KBBW}
 and Lemma \ref{lem:23021125}
 that the right-hand side of \eqref{eqn:Bla} equals
\[
\begin{cases}
1
\qquad
&\text{if $\mu_0+1-m \in {\mathbb{N}}$ and $\mu=(\mu_0+1-m, 0, \dots, 0)$, }
\\
0
&\text{otherwise}.  
\end{cases}
\]

Thus the proposition is shown by \eqref{eqn:DR}.  
\end{proof}

As we have mentioned, 
 the general theory \cite{xvr}
 of Zuckerman's derived functor does not guarantee
 the irreducibility of ${\mathcal{R}}_{{\mathfrak{q}}}^{S}({\mathbb{C}}_{\lambda-\rho({\mathfrak{u}})})$
 because $\lambda-\rho({\mathfrak{u}})$ is not 
 in the good range, 
 namely, 
 $\langle \lambda{\bf{1}}_{m+1}-\rho({\mathfrak{u}})+\rho_{\mathfrak{l}}, \alpha \rangle$
 is not necessarily positive
 for all $\alpha \in \Delta({\mathfrak{u}})$
 as $\lambda{\bf{1}}_{m+1}-\rho({\mathfrak{u}})+\rho_{\mathfrak{l}}
 =(m-1,m-2,\dots, 0,-1)$.  
Nevertheless, 
 in our specific setting, 
 one has the following irreducibility result:

\begin{proposition}
\label{prop:irr}
The $G$-module $\dcoh{m(m-1)}{G/L, {\mathcal{L}}_{m-1}}$ is
 non-zero and irreducible.  
\end{proposition}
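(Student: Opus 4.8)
The plan is to identify the $({\mathfrak{g}}_{\mathbb{C}},K)$-module $\dcoh{m(m-1)}{G/L,{\mathcal{L}}_{m-1}}_{K\text{-finite}} \simeq {\mathcal{R}}_{\mathfrak{q}}^{S}({\mathbb{C}}_{\lambda-\rho({\mathfrak{u}})})$ with the Zuckerman module whose $K$-type formula has just been computed in Proposition \ref{prop:23021120}, and to show it is both non-zero and irreducible. Non-vanishing is the easier half: by Proposition \ref{prop:23021120} the $K$-type multiplicities are given by the alternating sum in the generalized Blattner formula \eqref{eqn:Bla}, and since we are in the weakly fair range the cohomology ${\mathcal{R}}_{\mathfrak{q}}^{S-i}(\cdot)$ vanishes for $i \neq 0$ by \cite{xwong}; hence the alternating sum actually computes $\dim\operatorname{Hom}_K(\pi,{\mathcal{R}}_{\mathfrak{q}}^S({\mathbb{C}}_{\lambda-\rho({\mathfrak{u}})}))$ itself, and we exhibited infinitely many $\pi$ (namely ${\mathbb{C}}_{\ell+m-1}\boxtimes F^{SO(2m)}(\ell,0,\dots,0)$) for which this is $1$. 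So the module is non-zero.

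For irreducibility I would argue as follows. The $({\mathfrak{g}}_{\mathbb{C}},K)$-module $A:={\mathcal{R}}_{\mathfrak{q}}^S({\mathbb{C}}_{\lambda-\rho({\mathfrak{u}})})$ is unitarizable (again \cite{xwong}, weakly fair range), hence completely reducible, so it suffices to show it has a unique irreducible summand. From \eqref{eqn:DR} its ${\mathfrak{Z}}({\mathfrak{g}}_{\mathbb{C}})$-infinitesimal character is $\lambda{\bf{1}}_{m+1}+(0,-1,\dots,-m)=(m-1,m-2,\dots,0,-1)$, which is the infinitesimal character of the lowest weight module $L((\tfrac{n}{2}-1)e_0)$ with $n=2m$ identified in Remark \ref{rem:23021609}; moreover the $K$-type with $\ell=0$, namely ${\mathbb{C}}_{m-1}\boxtimes{\bf{1}}$, occurs in $A$ with multiplicity one and is a lowest $K$-type, so $A$ contains a copy of the irreducible unitary lowest weight module $\pi_0$ with that lowest $K$-type. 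The key point is then that the $K$-type support of $A$ computed in Proposition \ref{prop:23021120} coincides exactly with the $K$-type support of $\pi_0$ — indeed by Proposition \ref{prop:3.1}, or rather by the explicit decomposition of ${\mathcal{H}}({\mathfrak{p}}_-)$, the module $L((\tfrac{n}{2}-1)e_0)$ has precisely the $K$-types $\bigoplus_{\ell\ge 0}{\mathbb{C}}_{\ell+m-1}\boxtimes F^{SO(2m)}(\ell,0,\dots,0)$ each with multiplicity one. Since $\pi_0 \subseteq A$ and both sides have the same multiplicity-free $K$-type formula, the inclusion is forced to be an equality, so $A \simeq \pi_0$ is irreducible.

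The step I expect to be the main obstacle is pinning down that $L((\tfrac{n}{2}-1)e_0)$ really has the stated multiplicity-free $K$-type formula, and more precisely that the lowest weight module generated inside $A$ by the lowest $K$-type ${\mathbb{C}}_{m-1}\boxtimes{\bf{1}}$ is genuinely irreducible with that $K$-spectrum rather than, say, a reducible highest-weight module or a proper submodule thereof. One clean way around this is to invoke the Enright--Howe--Wallach classification \cite{EHW83}: the parameter $z=A(\lambda_0)=\tfrac n2$ is the first reduction point, and the corresponding unitary lowest weight module is exactly $L((\tfrac n2-1)e_0)$, whose underlying Harish-Chandra module is known to be the "minimal-type" piece $\operatorname{Pol}({\mathcal{H}})$ with the harmonic $K$-decomposition of Proposition \ref{prop:3.1}; I would cite that together with Remark \ref{rem:23021609}. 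An alternative, more self-contained route is to check directly that the ${\mathfrak{p}}_+$-action on the lowest $K$-type of $A$ generates all the higher $K$-types (so $A$ is a lowest weight module), and that every nonzero ${\mathfrak{g}}_{\mathbb{C}}$-submodule must contain the lowest $K$-type by the lowest-weight structure, giving irreducibility; combined with the $K$-type count this again yields $A\simeq\pi_0$. Either way, the remaining bookkeeping — matching infinitesimal characters and confirming multiplicity one at each level — is routine given Propositions \ref{prop:3.1} and \ref{prop:23021120}.
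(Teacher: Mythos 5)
Your proposal is correct in outline but takes a genuinely different route from the paper, and it hinges on one imported fact that deserves care. The paper does \emph{not} identify the cohomology with a known lowest weight module; it argues intrinsically. Writing $W_\ell={\mathbb{C}}_{\ell+m-1}\boxtimes \Kfdrep{SO(2m)}{\ell,0,\dots,0}$, any irreducible summand $V$ has $K$-support $\bigoplus_{\ell\in J}W_\ell$ by multiplicity-freeness; since ${\mathfrak{p}}_{\mathbb{C}}W_\ell$ can only meet $W_{\ell\pm 1}$, a gap in $J$ (after passing to the complementary summand if necessary, using complete reducibility of the unitarizable module) produces a nonzero finite-dimensional submodule; unitarizability rules out nontrivial finite-dimensional submodules, and the infinitesimal character $(m-1,m-2,\dots,0,-1)$ rules out the trivial one. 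This is entirely self-contained given Proposition \ref{prop:23021120}. Your argument instead exhibits the irreducible lowest weight module $\pi_0$ generated by $W_0$ inside $A$ --- which is sound, since ${\mathfrak{p}}_-W_0=0$ for $SO(2)$-weight reasons, so $U({\mathfrak{g}}_{\mathbb{C}})W_0$ is a cyclic unitarizable lowest weight module, hence irreducible because $W_0$ occurs with multiplicity one --- and then matches $K$-spectra. Your non-vanishing argument coincides with the paper's.

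The caveat is the fact you flagged yourself: that $L((\tfrac n2-1)e_0)$ has exactly the $K$-types $\bigoplus_\ell W_\ell$, each once, is not supplied by anything proved in the paper. Proposition \ref{prop:3.1} computes the $K$-types of $\Sol(D,\Delta)$, not of $L((\tfrac n2-1)e_0)$; the identification of the two in Remark \ref{rem:23021609} is stated without proof, and the irreducibility of $\Sol(D,\Delta)$ is itself deduced in the paper \emph{from} the present proposition, so leaning on Proposition \ref{prop:3.1} here would be circular. Moreover \cite{EHW83} gives the unitarizability points $A(\lambda_0)$, $B(\lambda_0)$ but not the $K$-spectrum of $L$ at the first reduction point; for that one needs the structure of the generalized Verma module there, or the classical $K$-type formula of the minimal representation of $O(2,2m)$ from the literature on that representation. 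With such a citation your proof closes; without it, the $K$-spectrum claim is the one genuine remaining gap. The paper's route buys independence from this external input at the cost of a slightly more delicate (but short) combinatorial argument on the $K$-support.
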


\begin{proof}
Our proof utilizes the multiplicity-free $K$-type formula
 in Proposition \ref{prop:23021120}.  
Let $W_{\ell}:={\mathbb{C}}_{\ell+m-1} \boxtimes \Kfdrep {SO(2m)}{\ell, 0, \dots, 0}$.  
Suppose $V$ is an irreducible $({\mathfrak{g}}_{\mathbb{C}}, K)$-submodule
 in $\dcoh{m(m-1)}{G/L, {\mathcal{L}}_{m-1}}_{K\operatorname{-finite}}$.  
Since the $K$-type formula in Proposition \ref{prop:23021120}
 is multiplicity-free, 
 the $K$-type of $V$ is of the form
 $\oplus_{\ell \in J} W_{\ell}$
 for some subset $J \subset {\mathbb{N}}$.  
We shall show $J={\mathbb{N}}$.  
Assume it were not the case.  
Since the underlying $({\mathfrak{g}}_{\mathbb{C}}, K)$-module
 of the $G$-module $\dcoh{m(m-1)}{G/L, {\mathcal{L}}_{m-1}}$ is unitarizable, 
 it is completely reducible.  
Therefore, 
 by replacing $J$ with ${\mathbb{N}} \setminus J$
 if necessary, 
 we may find $N \in J$
 such that $N+1 \not \in J$.  
Then $V \cap \oplus_{\ell \le N} W_{\ell}$ would be 
 a $({\mathfrak{g}}_{\mathbb{C}}, K)$-submodule 
 of $V$
 because the $SO(2m)$-type
 in 
$
   {\mathfrak{p}}_{\mathbb{C}} W_{\ell}
 :={\mathbb{C}}\text{-span}\{X v : X \in {\mathfrak{p}}_{\mathbb{C}}, \, v \in W_{\ell}\}
$
 must be either $W_{\ell+1}$ or $W_{\ell-1}$ 
but $W_{N+1} \not \subset V$
 by the choice of $N$.

On the other hand, 
 the $G$-module $\dcoh{m(m-1)}{G/L, {\mathcal{L}}_{m-1}}$
 cannot have a non-trivial finite-dimensional submodule
 except for the trivial one-dimensional representation
 because it is unitarizable.  
But the trivial one-dimensional representation cannot be 
 a submodule 
 because the ${\mathfrak{Z}}({\mathfrak{g}}_{\mathbb{C}})$-infinitesimal character 
 of the cohomology is $(m-1, m-2, \dots, 0,-1)$
 in the Harish-Chandra parametrization.  
Hence the proposition is proved.  
\end{proof}

\section{Proof of Theorem \ref{thm:main} and Corollary \ref{cor:230217}}

This section completes the proof of our main results.  
We have seen in Section \ref{sec:Pen}
 that the Penrose transform is a $G$-intertwining operator:
\[
  {\mathcal{R}} \colon \dcoh{m(m-1)}{G/L, {\mathcal{L}}_{m-1}} \to 
C^{\infty}(G/K, {\mathcal{V}}_{m-1}).  
\]

Since $\Delta({\mathfrak{u}} \cap {\mathfrak{p}}_{\mathbb{C}})
\subset \Delta({\mathfrak{p}}_+)$, 
 $X$ has a $K$-equivariant holomorphic fiber bundle structure 
 over $C$, 
 one has from \cite{xhs-ellholo, xholoell2} 
 that ${\mathcal{R}}$ is non-zero
 on $W_0={\mathbb{C}}_{m-1} \boxtimes {\mathbb{C}}$, 
 and $\operatorname{Image} {\mathcal{R}}
 \subset {\mathcal{O}}(G/K, {\mathcal{V}}_{m-1})$.

By the irreducibility of the $G$-module
 $\dcoh{m(m-1)}{G/L, {\mathcal{L}}_{m-1}}$ 
 given by Proposition \ref{prop:irr}, 
one sees that ${\mathcal{R}}$ is injective.  
Since ${\mathcal{O}}(G/K, {\mathcal{V}}_{m-1})$ 
 is $K$-multiplicity-free, 
 one obtains the following proposition from 
 Propositions \ref{prop:3.1} and \ref{prop:23021120}.

\begin{proposition}
The Penrose transform 
$
   {\mathcal{R}}
$
 in \eqref{eqn:defPen}
 induces a $({\mathfrak{g}}_{\mathbb{C}}, K)$-isomorphism
 between the underlying $({\mathfrak{g}}_{\mathbb{C}}, K)$-modules
 of $\dcoh{m(m-1)}{G/L, {\mathcal{L}}_{m-1}}$
 and $\Sol (D, \Delta)$.  
\end{proposition}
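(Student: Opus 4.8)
The plan is to establish the $({\mathfrak{g}}_{\mathbb{C}},K)$-isomorphism by combining three ingredients already assembled: (1) the Penrose transform ${\mathcal{R}}$ is a nonzero $G$-intertwining map into ${\mathcal{O}}(G/K, {\mathcal{V}}_{m-1})$; (2) the source $\dcoh{m(m-1)}{G/L, {\mathcal{L}}_{m-1}}$ is irreducible (Proposition \ref{prop:irr}); and (3) the $K$-type decompositions of the source and of $\Sol(D, \Delta)$ coincide and are multiplicity-free (Propositions \ref{prop:23021120} and \ref{prop:3.1}). First I would observe that by irreducibility of the source and the nonvanishing of ${\mathcal{R}}$ on the minimal $K$-type $W_0={\mathbb{C}}_{m-1}\boxtimes{\mathbb{C}}$, the map ${\mathcal{R}}$ has trivial kernel, hence is injective on the underlying $({\mathfrak{g}}_{\mathbb{C}},K)$-module; equivalently $\operatorname{Image}{\mathcal{R}}$ is an irreducible $({\mathfrak{g}}_{\mathbb{C}},K)$-submodule of ${\mathcal{O}}(G/K, {\mathcal{V}}_{m-1})_{K\operatorname{-finite}}$ whose $K$-type formula is exactly $\bigoplus_{\ell=0}^{\infty}{\mathbb{C}}_{\ell+m-1}\boxtimes F^{SO(2m)}(\ell,0,\dots,0)$.

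Next I would show $\operatorname{Image}{\mathcal{R}}=\Sol(D,\Delta)_{K\operatorname{-finite}}$. For the inclusion $\subseteq$, the argument is that ${\mathcal{R}}$ is a $G$-map and its image lands in the holomorphic sections; since the composite $\Delta\circ{\mathcal{R}}$ is a $G$-intertwining operator from an irreducible module $\dcoh{m(m-1)}{G/L,{\mathcal{L}}_{m-1}}$ into ${\mathcal{O}}(D,{\mathcal{V}}_{m+1})$, and the $K$-types appearing in the image of ${\mathcal{R}}$ (namely $W_\ell$) are not among the $K$-types that could survive under $\Delta$ into ${\mathcal{V}}_{m+1}$ compatibly — more directly, by Schur's lemma $\Delta\circ{\mathcal{R}}$ is either zero or injective, and it cannot be injective because the infinitesimal characters of $\dcoh{m(m-1)}{G/L,{\mathcal{L}}_{m-1}}$ and of the relevant submodule of ${\mathcal{O}}(D,{\mathcal{V}}_{m+1})$ (which, via \eqref{eqn:Lapl} with $\ell=1$, correspond to parameters $m-1$ and $m+1$) do not match in a way forcing a nonzero map — hence $\Delta\circ{\mathcal{R}}=0$, i.e. $\operatorname{Image}{\mathcal{R}}\subseteq\Sol(D,\Delta)$. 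For the reverse inclusion I would compare $K$-types: by Proposition \ref{prop:3.1}, $\Sol(D,\Delta)_{K\operatorname{-finite}}=\bigoplus_{\ell=0}^{\infty}W_\ell$, which is precisely $\operatorname{Image}{\mathcal{R}}$, and since ${\mathcal{O}}(G/K,{\mathcal{V}}_{m-1})$ is $K$-multiplicity-free, two $({\mathfrak{g}}_{\mathbb{C}},K)$-submodules with identical $K$-type supports must coincide. Therefore ${\mathcal{R}}$ restricts to a $({\mathfrak{g}}_{\mathbb{C}},K)$-isomorphism onto $\Sol(D,\Delta)_{K\operatorname{-finite}}$.

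The main obstacle I anticipate is making rigorous the claim that $\Delta\circ{\mathcal{R}}=0$ rather than merely that the image is "small". The clean way is the multiplicity-freeness of the ambient space ${\mathcal{O}}(G/K,{\mathcal{V}}_{m-1})$: because this $({\mathfrak{g}}_{\mathbb{C}},K)$-module is $K$-multiplicity-free and isomorphic to $\operatorname{Pol}({\mathfrak{p}}_-)\otimes({\mathbb{C}}_{m-1}\boxtimes{\bf 1})$, it has a unique irreducible $({\mathfrak{g}}_{\mathbb{C}},K)$-submodule generated by its minimal $K$-type $W_0$; this submodule must be $\operatorname{Image}{\mathcal{R}}$ (since ${\mathcal{R}}$ is nonzero on $W_0$ and its image is irreducible), and it must also be $\Sol(D,\Delta)_{K\operatorname{-finite}}$ (which is an irreducible $({\mathfrak{g}}_{\mathbb{C}},K)$-submodule containing $W_0$, by Proposition \ref{prop:3.1} together with the fact that $\Sol(D,\Delta)$ is $G$-stable). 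Uniqueness of the minimal submodule then forces $\operatorname{Image}{\mathcal{R}}=\Sol(D,\Delta)_{K\operatorname{-finite}}$ directly, bypassing the need to analyze $\Delta\circ{\mathcal{R}}$ at all. I would use this cleaner route as the spine of the proof, citing \cite{xhs-ellholo, xholoell2} for $\operatorname{Image}{\mathcal{R}}\subset{\mathcal{O}}(G/K,{\mathcal{V}}_{m-1})$ and the nonvanishing on $W_0$, Proposition \ref{prop:irr} for irreducibility of the source, and Propositions \ref{prop:3.1} and \ref{prop:23021120} for the matching $K$-type data.
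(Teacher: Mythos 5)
Your proposal is correct and is essentially the argument the paper gives: nonvanishing of ${\mathcal R}$ on the minimal $K$-type $W_0$ together with irreducibility of the source (Proposition \ref{prop:irr}) yield injectivity, and since ${\mathcal O}(G/K,{\mathcal V}_{m-1})$ is $K$-multiplicity-free, a $({\mathfrak g}_{\mathbb C},K)$-submodule is determined by its $K$-type support, so the matching formulas in Propositions \ref{prop:3.1} and \ref{prop:23021120} force $\operatorname{Image}{\mathcal R}=\Sol(D,\Delta)_{K\operatorname{-finite}}$. One caution on your final paragraph: you should not invoke the irreducibility of $\Sol(D,\Delta)$ there --- in the paper that is a consequence of this proposition rather than an input --- but this is harmless, since the support argument from your second paragraph (which needs only that $\Sol(D,\Delta)_{K\operatorname{-finite}}$ is a submodule with the stated $K$-types) already closes the proof without it, and likewise makes the somewhat vague $\Delta\circ{\mathcal R}=0$ discussion unnecessary.
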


Now Theorem \ref{thm:main} follows from the general argument
 on the maximal globalization 
 as in \cite{xhseki02}
 because both the $G$-module
 $\dcoh{m(m-1)}{G/L, {\mathcal{L}}_{m-1}}$
 and $\Sol (D, \Delta)$ are the maximal globalizations
 of their underlying $({\mathfrak{g}}_{\mathbb{C}}, K)$-modules.

Since the $G$-module  $\dcoh{m(m-1)}{G/L, {\mathcal{L}}_{m-1}}$
 is irreducible (Proposition \ref{prop:irr}), 
 and its underlying $({\mathfrak{g}}_{\mathbb{C}}, K)$-module
 is a lowest weight module \cite{xhs-ellholo, xholoell2}
 with the $K$-type formula as in Proposition \ref{prop:23021120}, 
 it is identified with one of the two minimal $({\mathfrak{g}}_{\mathbb{C}},K)$-module, 
 see \cite{xtamori}.  
The same argument applies 
 if we replace ${\mathfrak{q}}={\mathfrak{l}}_{\mathbb{C}} + {\mathfrak{u}}$
 by ${\mathfrak{l}}_{\mathbb{C}}+{\mathfrak{u}}^-$.  
Thus Corollary \ref{cor:230217} is also shown.  

\vskip 1pc
\par\noindent
{\bf{Acknowledgement}}
\newline
The article treats a topic 
 which is closely related to the invited talk
 at the session \lq\lq{Harmonic Analysis and Representation Theory}\rq\rq\ in the 29th Nordic Congress of Mathematicians
 held on July, 2023 in Denmark.  
The author would like to express her sincere gratitude 
 to T.\ Kobayashi, P.-E.\ Paradan, M.\ Pevzner, J.\ Frahm,
 B.\ {\O}rsted, B.\ Speh, and G.\ Zhang 
 for their encouragement, interests and comments.

\leftline{Hideko SEKIGUCHI}
\leftline{Graduate School of Mathematical Sciences, }
\leftline{The University of Tokyo,} 
\leftline{Japan.  }

\end{document}